\listfiles
\documentclass[3pt,preprint]{elsarticle}
\usepackage{threeparttable}
\usepackage{lineno,hyperref}
\usepackage{verbatim}
\usepackage{amsmath}
\usepackage{CJK}
\usepackage{algorithm}
\usepackage{algorithmicx}
\usepackage{algpseudocode}
\usepackage{color}
\usepackage{xcolor}
\modulolinenumbers[5]
\usepackage{mathrsfs}
\usepackage{amssymb}
\usepackage{subfigure} 
\DeclareMathAlphabet{\mathscr}{OT1}{pzc}{m}{it}
\usepackage{booktabs}
\usepackage{graphicx}
\usepackage{ntheorem}
\usepackage{multirow}
\usepackage{pifont}
\usepackage{enumitem}
%
\usepackage{ulem}
\usepackage[numbers]{natbib}
\bibliographystyle{plainnat}
%
\usepackage{listings}
\lstset{ 
	language=Matlab,                		
    numbers=left,                  			
 		numberstyle=\footnotesize,      		
 		stepnumber=1,                   			
	showspaces=false,               		
	showstringspaces=false,         		
	showtabs=false,                 			
	breaklines=true,                			
	breakatwhitespace=false,        		
	escapeinside={\%*}{*)}          		
}

\journal{Pattern Recognition}
\newtheorem{remark}{Remark}
\newtheorem{theorem}{Theorem}
\newtheorem{lemma}{Lemma}
\newtheorem*{proof}{Proof}

\newtheorem{property}{Property}
\newtheorem{proposition}{Proposition}
\newcommand{\expo}{\overset{\circ}{\operatorname{exp}}}










\begin{document}

\begin{frontmatter}

\title{CSGO: Constrained-Softassign Gradient Optimization For Large Graph Matching\tnoteref{mytitlenote}}

\tnotetext[mytitlenote]{The research is supported by Natural Science Foundation of China (12271047); Guangdong Provincial Key Laboratory of Interdisciplinary Research and Application for Data Science, BNU-HKBU United International College (2022B1212010006); UIC research grant (UICR0400036-21C, UICR0400008-21; R04202405-21); Guangdong College Enhancement and Innovation Program (2021ZDZX1046).}

\author[bnubj,bnuzh]{Binrui Shen}
\ead{binrui.shen19@bnu.edu.cn}
\author[XJTLU]{Qiang Niu}
\ead{qiang.niu@xjtlu.edu.cn}
\author[bnu,uic]{Shengxin Zhu}
\ead{Shengxin.Zhu@bnu.edu.cn}


\address[bnubj]{School of Mathematical Sciences, Laboratory of Mathematics and Complex Systems, MOE, Beijing Normal University, 100875 Beijing, P.R.China}

\address[bnuzh]{Faculty of Arts and Sciences, Beijing Normal University, 519087 Zhuhai, P.R.China}

\address[bnu]{Advanced Institute of Natural Science, Beijing Normal University, Zhuhai 519087, P.R.China}

\address[uic]{Guangdong Provincial Key Laboratory of Interdisciplinary Research and Application for Data Science, BNU-HKBU United International College, Zhuhai 519087, P.R.China}





\begin{abstract}
Graph matching aims to find correspondences between two graphs. This paper integrates several well-known graph matching algorithms into a framework: the constrained gradient method. The primary difference among these algorithms lies in tuning a step size parameter and constraining operators. By leveraging these insights, we propose an adaptive step size parameter to guarantee the underlying algorithms' convergence, simultaneously enhancing their efficiency and robustness. For the constraining operator, we introduce a scalable softassign for large graph matching problems. Compared to the original softassign, our approach offers increased speed, improved robustness, and reduced risk of overflow. The advanced constraining operator enables a CSGO for large graph matching, which outperforms state-of-the-art methods in experiments. Notably, in attributed graph matching tasks, CSGO achieves an over 10X increase in speed compared to current constrained gradient algorithms.



\end{abstract}

\begin{keyword}
graph matching \sep softassign \sep step size parameter\sep quadratic assignment problem \sep network alignment

\MSC[2020] 05C60\sep  05C85
\end{keyword}

\end{frontmatter}


\section{Introduction}
Graph matching aims to find correspondences between graphs with potential relationships. It can be used in various fields of intelligent information processing, e.g., activity analysis \cite{chen2012efficient},
shape matching \cite{2011Scale, 2010Learning}, detection of similar pictures \cite{shen2020fabricated}, graph similarity computation \cite{lan2022aednet,lan2022more}, medical image \cite{mh2024lvm}, knowledge graph alignment \cite{xu2019cross}, autonomous driving \cite{song2023graphalign}, alignment of vision-language models \cite{nguyen2024logramedlongcontextmultigraph}, and COVID-19 disease mechanism study \cite{gordon2020comparative}.

The graph matching problem is normally formulated as a quadratic assignment problem (QAP) that is NP-hard \cite{garey1979computers}. Despite decades of research on graph matching, it is still challenging to acquire a global optimum solution in acceptable computational time. Relaxation is a common technique to obtain an approximate solution at an acceptable cost. Popular approaches include but are not limited to the spectral-based method \cite{caelli2004inexact,  hermanns2023grasp, robles2007riemannian,umeyama1988eigendecomposition}, probabilistic modeling method \cite{egozi2012probabilistic}, random walk method \cite{2010Reweighted}, path-following method \cite{maron2018probably, wang2017graph, zaslavskiy2008path} and optimal transport method \cite{xu2019scalable, xu2019gromov}.

Among recently proposed graph matching algorithms, continuous optimization algorithms have attracted much research attention due to their outstanding performance \cite{ 2010Reweighted,gold1996graduated,leordeanu2005spectral, leordeanu2009integer}. Continuous optimization algorithms first solve a relaxed form of the NP-hard matching problem and obtain a discrete solution by converting the continuous solution back to the discrete domain. Recent advances in the fundamental connections between discrete graph theory and continuous problems have been established in \cite{chang2021lovasz}. Popular continuous optimization algorithms include spectral matching (SM) \cite{leordeanu2005spectral}, graduated assignment (GA) \cite{gold1996graduated}, integer projected fixed-point method (IPFP) \cite{leordeanu2009integer}, and doubly stochastic projected fixed-point method (DSPFP) \cite{lu2016fast}. These algorithms can be encapsulated within a constrained gradient method which updates the solution by applying the constrained gradient. Their differences lie in constraining operators and selecting the step size parameter, as summarized in Table \ref{tab:algorithms}. These algorithms solve graph matching problems, formulated by the Lawler quadratic assignment problem, with $O(n^4)$ cost. The cost of these algorithms limits their application in large graph matching problems \cite{lu2016fast}. To address this issue, \citet{lu2016fast} shows that constrained gradient methods enjoy only $O(n^3)$ cost by modeling the graph matching problem as the Koopmans-Beckmann quadratic assignment problem (KB-QAP). Constrained gradient algorithms, such as GA, SM, and IPFP, can enjoy this scalability in KB-QAP. Although DSPFP \cite{lu2016fast} achieves superior performance in large graph matching problems, two challenges persist: (a) the constrained gradient method \cite{lu2016fast} for KB-QAP lacks an efficient strategy to tune the step size parameter, as the strategy proposed in \cite{leordeanu2009integer} has a time complexity of $O(n^4)$; (b) existing constrained operators still lack robustness or speed in large graph matching, which is discussed in Section 3 and summarized in Table \ref{tab:operators}.




%


\begin{table}[ht]
\centering

\label{tab:algorithms}
\resizebox{0.6\columnwidth}{!}{%
\begin{tabular}{cccc}
\hline
Algorithms                                  & Relaxation        & Constraining operator & Step size parameter \\ \hline
GA\cite{gold1996graduated} & doubly stochastic & softassign    & 1                   \\
SM\cite{leordeanu2005spectral}  & spectral          & norm-normalization & 1        \\
IPFP\cite{leordeanu2009integer} & doubly stochastic & discrete projection                        & adaptive \\
DSPFP\cite{lu2016fast}          & doubly stochastic & alternating projection               & 0.5      \\ \hline
\end{tabular}%
}
\caption{Summary of constrained gradient algorithms}
\end{table}

Our main contributions are summarized as follows.
\begin{enumerate}[label=(\arabic*), leftmargin=1.5cm] 

\item  \textbf{Scalable Softassign}: we propose a scalable softassign method as a constraining operator for large graph matching problems with four key improvements: a) demonstrating the sensitivity of softassign to nodes' number $n$ and providing a solution by tuning a parameter; b) normalizing the magnitude of the input matrix to ensure the stability of the algorithm; c) mitigating the risk of overflow by the log-sum-exp technique; d) employing a vectorized iteration to double the efficiency. Compared with another SOTA constraining operator \cite{lu2016fast},  scalable softassign exhibits superior convergence properties.

 \item \textbf{Step size parameter tuning}:  we propose an adaptive step size parameter for the constrained gradient algorithms solving the KB-QAP. It guarantees the convergence of algorithms and enhances their efficiency and robustness. Compared with the existing strategy in \cite{leordeanu2009integer}, the computational cost is reduced from $O(n^4)$ to $O(n^3)$ in each iteration. Further, we simplify the computation of the optimal step size parameter, thereby enhancing the efficiency of constrained gradient algorithms in addressing both the Lawler QAP and the KB-QAP.

  \item \textbf{Maching algorithm:} 
  Scalable Softassign yields a constrained-softassign gradient optimization (CSGO) for large graph matching problems. A proposed warm-start strategy further enhances the speed.  In addition, we prove that CSGO is invariant to a scaling and shifting change on the affinity matrix and feature matrix of the matching graphs (other constrained gradient algorithms also enjoy this property). CSGO outperforms other constrained gradient methods in terms of both accuracy and speed by an order of magnitude.

\begin{figure}
    \centering
    \includegraphics[width=0.45\linewidth]{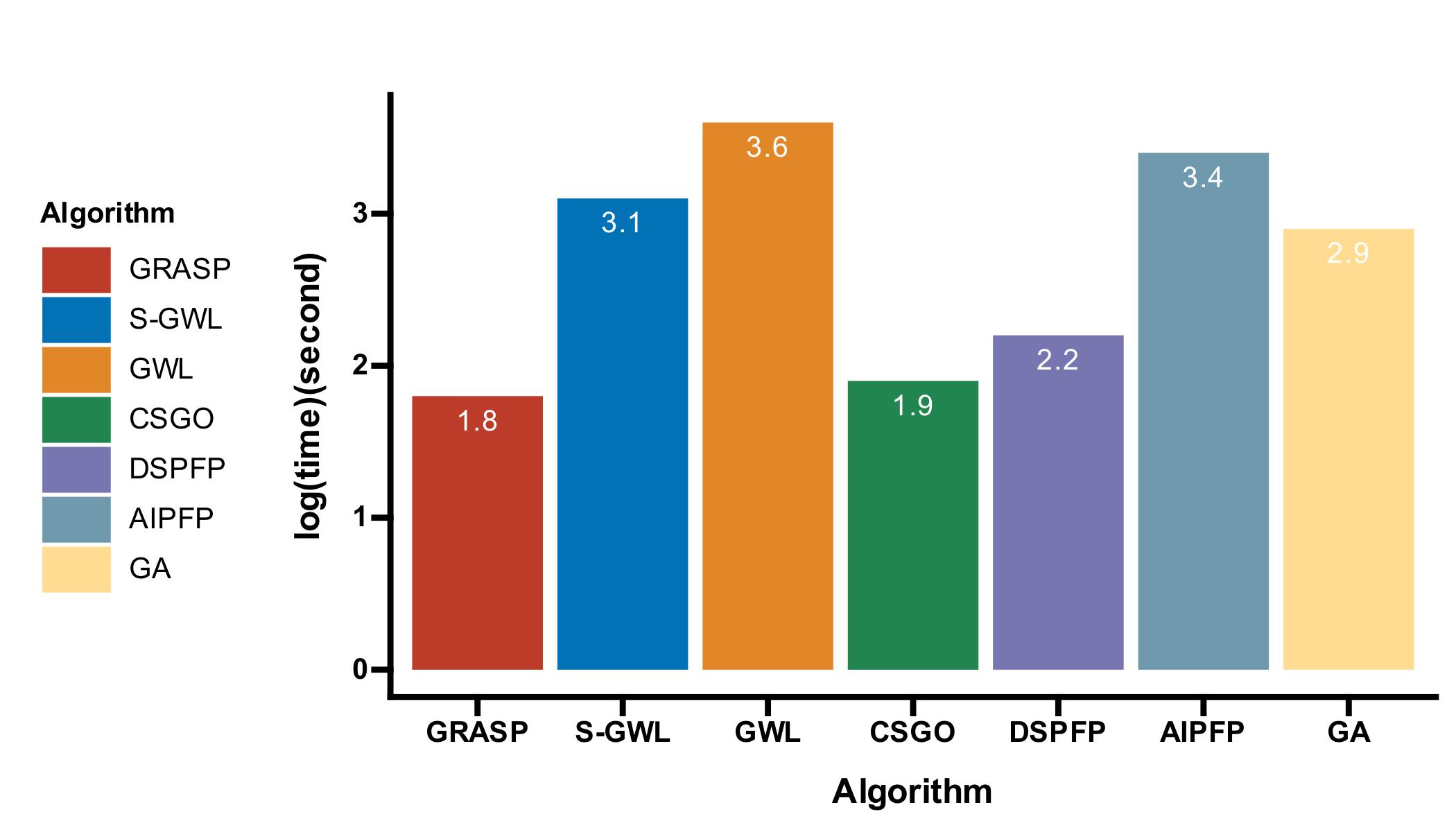}\includegraphics[width=0.45\linewidth]{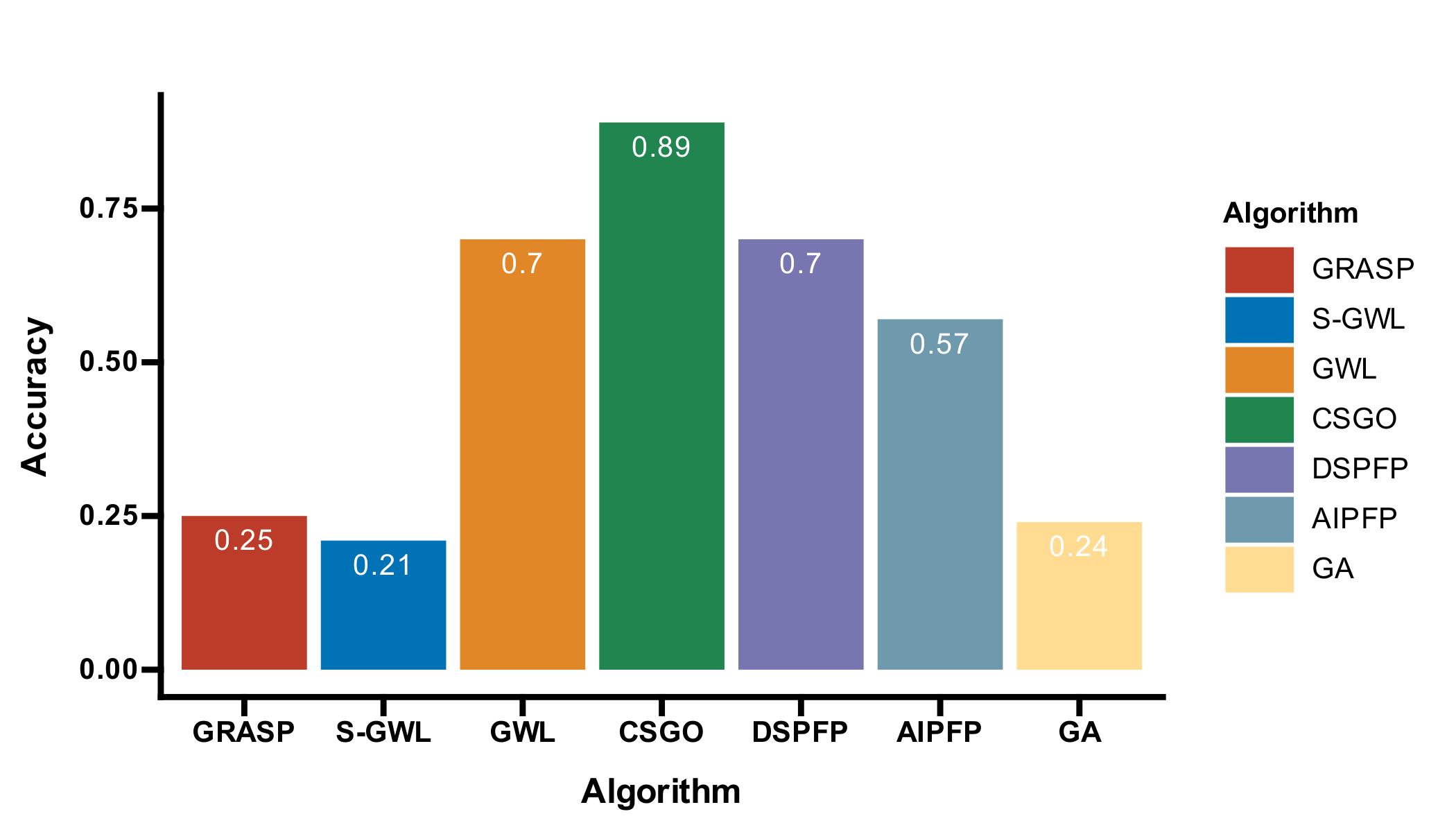}
        
    \caption{Mean matching accuracy and running time of different algorithms on Facebook networks.}
    \label{fig:performance}
\end{figure}
     \end{enumerate}

The rest of this paper is organized as follows. Section 2 reviews the formulation of the graph matching problem and the constrained gradient method. Section 3 proposes a scalable softassign. Section 4 discusses the selection of the step size parameter, which is a crucial parameter of the constrained gradient method. Section 5 introduces a warm-start strategy, the matching algorithm CSGO, and robustness analysis. Section 6 investigates the effects of the adaptive step size parameter and the warm-start strategy on graph matching performance. In this section, the comparison between CSGO and other graph matching methods is also performed. Section 7 gives a summary and discusses future work.

\section{Preliminaries}
This section introduces the attributed graph, the matching matrix for graph correspondences, the formulation of graph matching problems, and a constrained gradient method for solving them. Common symbols are summarised in Table \ref{tab:symbols}.
\begin{table}[]
\centering
\caption{Symbols and Notations.}
\resizebox{0.45\columnwidth}{!}{%
\begin{tabular}{cc}
\hline
Symbol                       & Definition                         \\ \hline
${G},\tilde{{G}}$                 & matching graphs                   \\
$A,\tilde{A}$                          & edge attribute matrices of  ${G}$ and $\tilde{{G}}$                \\
$F,\tilde{F}$                          & node attribute matrices of  ${G}$ and $\tilde{{G}}$          \\
$n,\tilde{n}$                          & number of nodes of  ${G}$ and $\tilde{{G}}$                  \\
$M$                          & matching matrix                   \\
$\Pi_{n \times n}$           & set of $n \times n$ permutation matrices       \\
$\Sigma_{n \times n}$        & set of $n \times n$ doubly stochastic matrices \\\hline
$\mathbf{1}$,$ \mathbf{0}$   & a column vector of all 1s,0s      \\ 
${D}_{( {\mathbf{x} })}$ & diagonal matrix of a vector $\mathbf{x}$                      \\
$\operatorname{tr}(\cdot)$                  & trace                             \\
$\operatorname{vec}(\cdot)$   & the vectorization of a matrix\\

$\langle \cdot,\cdot\rangle$ & inner product                     \\
$\|\cdot \|_{Fro}$                    &Frobenius norm                 \\
$\expo$                       & element-wise exponential                \\

$\oslash$                    & element-wise division                                   \\ \hline
$\beta$                      & the parameter in softassign       \\
$\alpha$                      & the step size parameter       \\
$\mathcal{P}_{sk}(\cdot)$    & Sinkhorn method                   \\
 \hline
\end{tabular}%
}
\end{table}
\label{tab:symbols}
\subsection{Attributed graph and matching matrix}

An \textit{undirected attributed graph} $G=\{V,E,A,F\}$ consists of a finite set of nodes $V = \{1, . . . ,n\}$ and a set of edges $E \subset V \times V$. $A$ is a nonnegative symmetric \textit{affinity matrix} whose element $A_{ij}$ represents an attribute of $E_{ij}$. The $i_{th}$ row of feature matrix $F$ represents the attribute vector of $V_{i}$. If the nodes' attributes are constants, $A_{ii}$ can represent the attribute of $V_{i}$. In addition, if nodes and edges do not have attributes, the graph is a plain graph, and $A$ is an adjacency matrix.

Given two attributed graphs $G=\{V,E,A,F\}$ and $\tilde{G}=\{\tilde{V},\tilde{E},\tilde{A},\tilde{F}\}$, we first consider the same cardinality of the vertice $n = \tilde{n}$ for simplicity. A matching matrix $M\in \mathbb{R}^{n\times n}$ can encodes the correspondence between nodes:
\begin{equation}
    M_{i \tilde{i}}=\left\{\begin{array}{ll}
1 & \text { if node } i \text { in } G \text { corresponds to node } \tilde{i} \text { in } \widetilde{G}, \\
0 & \text { otherwise. }
\end{array}\right.
\end{equation}
Subject to the one-to-one constraint, a matching matrix is a permutation matrix. The set of permutation matrices is denoted as $\Pi_{n \times n} = \{M\mathbf{1}=\mathbf{1},{M}^{T}\mathbf{1}=\mathbf{1}, {M}\in\{0,1\}^{n\times n}\}$ where $\mathbf{1}$ represents vectors with all-one. 




\subsection{Graph matching problem and relaxation}
In \cite{lu2016fast, zaslavskiy2008path}, the graph matching problem is formulated as a Koopmans-Beckmann QAP
\begin{equation}
\min\limits_{M\in \Pi_{n \times n}} \underbrace{\frac{1}{2}
\left\|A-M\widetilde{A}M^{T}\right\|_{Fro}^{2}}_{\text{Edges' dissimilarites}}+\underbrace{\lambda \left\|F-M \widetilde{F}\right\|_{Fro}^{2}}_{\text{Noes' dissimilarites}},
\label{eq. object 1}
\end{equation}
where $\lambda$ is a parameter and $\| \cdot \|_{Fro}$ is the Frobenius norm.  As noticed $\|X\|_{Fro}^2={\operatorname{tr}\left(XX^T\right)}$, the problem  \eqref{eq. object 1} is equivalent to 
\begin{equation}
\max\limits_{M\in \Pi_{n \times n}}  \frac{1}{2} \operatorname{tr}\left({M}^{T} {A} {M} {\widetilde{A}}\right)+\lambda \operatorname{tr}\left({M}^{T}{K}\right),  
\label{eq.Object_fast}
\end{equation}
where $K=F\tilde{F}^T$ and $\operatorname{tr(\cdot)}$ is the trace operator.

Due to the discrete constraints, \eqref{eq.Object_fast}  is an NP-hard problem \cite{1963The}. \textit{Relaxation} is a common technique to address this problem: firstly, a relaxed form of the discrete problem is solved, which typically yields an easier-to-obtain solution; secondly, the solution is converted back into the initial discrete domain by solving a linear assignment problem, as discussed in Section 3. Our algorithm aims to optimize the following continuous problem, in which we only drop the integer constraints from the problem \eqref{eq.Object_fast}:

\noindent \textbf{Problem}
\begin{equation}
\max\limits_{M\in \Sigma_{n \times n}}\mathcal{Z}(M),   \ \mathcal{Z}(M) =\frac{1}{2} \operatorname{tr}\left({M}^{T} {A} {M} {\widetilde{A}}\right)+\lambda \operatorname{tr}\left({M}^{T}{K}\right),  
\label{eq.Object_relaxedKB}
\end{equation}
where $\Sigma_{n \times n}:=\{M: M\mathbf{1}=\mathbf{1}, M^{T}\mathbf{1}=\mathbf{1}, M \geq 0$\} is the convex hull of the set of permutation matrices.



\subsection{Constrained gradient method}
The constrained gradient method \cite{lu2016fast} can find a relaxed solution of problem \eqref{eq.Object_fast}:
\begin{equation}
\begin{array}{cc}

    M^{(t+1)}=(1- \alpha)M^{(t)}  + \alpha D^{(t)} ,\\
    D^{(t)} =  \mathcal{P}(\nabla \mathcal{Z}(M^{(t)}))  = \mathcal{P}(AM^{(t)}\tilde{A}+\lambda K),
    \end{array}
    \label{iter.DSPFP}
\end{equation}
where $\alpha$ is a step size parameter and $\mathcal{P}(\cdot)$ is a constraining operator used to obtain a constrained gradient matrix satisfying the underlying constraints. With this formulation, spectral matching, graduated assignment method, doubly stochastic projected fixed-point method, and integer fixed-point method can enjoy $O(n^3)$ cost and $O(n^2)$ space complexity (derivation details are shown in the Appendix). These algorithms differ in the step size parameter and constraining operators, which affects their performance significantly. 





There are four common constraining operators. The simplest constraining operator, norm-normalization,  only concerns the norm and nonnegativity of the gradient matrix. Three tighter constraining operators are the Hungarian method \cite{kuhn1955hungarian} used in IPFP, the greedy linear method \cite{leordeanu2005spectral} used in approximating integer projected fixed-point method (AIPFP) \cite{lu2016fast}, and the alternating projection method \cite{zass2006doubly} used in DSPFP \cite{lu2016fast}.  Four constraining operators are summarized in Table \ref{tab:operators}, and more details are shown in Appendix. Enlightened by these works, we develop a scalable softassign method for large graph matching problems.

\begin{table}[]
\centering
\resizebox{\textwidth}{!}{%
\begin{tabular}{cccccc}
Types      & Constraint sets        & \multicolumn{1}{c}{Constraining operators} & \multicolumn{1}{c}{Complexity} &\multicolumn{1}{c}{Limitations} & \multicolumn{1}{c}{Algorithms}\\ \hline
\multicolumn{1}{l}{} & \multicolumn{1}{l}{} &                                &                                \\
\begin{tabular}[c]{@{}c@{}}spectral \end{tabular} &
  \begin{tabular}[c]{@{}c@{}}$\|P\| =1$\\ $P \geq 0$\end{tabular} &
  \multicolumn{1}{c}{\begin{tabular}[c]{@{}c@{}}norm-normalization\\  operator \end{tabular}}&
  \multicolumn{1}{c}{\begin{tabular}[c]{@{}c@{}}$O(n^2)$\end{tabular}} &\multicolumn{1}{c}{sensity to noises} &\multicolumn{1}{c}{SM} \\
\multicolumn{1}{l}{} & \multicolumn{1}{l}{} &                                 &                                \\ \hline
\multirow{3}{*}{\begin{tabular}[c]{@{}c@{}} \\ doubly \\ stochastic\end{tabular}} &
  \multirow{3}{*}{\begin{tabular}[c]{@{}c@{}} $P \in \Sigma_{n \times n}$\end{tabular}} &
  \multicolumn{1}{c}{ \textbf{softassign}} &
  \multicolumn{1}{c}{$O(n^2)$ each iteration} & \multicolumn{1}{c}{sensity to nodes' cardinality} & \multicolumn{1}{c}{GA}\\
                     &                      &                                 &                                \\
 &
   & \multicolumn{1}{c}{\begin{tabular}[c]{@{}c@{}}{alternating} \\ {projection}\end{tabular}} & $O(n^2)$ each iteration&
  \multicolumn{1}{c}{\begin{tabular}[c]{@{}c@{}} poor convergence \end{tabular}} &
  \multicolumn{1}{c}{\begin{tabular}[c]{@{}c@{}}DSPFP \end{tabular}}\\ \hline
\multicolumn{1}{c}{} & \multicolumn{1}{c}{} &                                 &                                \\
\begin{tabular}[c]{@{}c@{}}permutation \end{tabular} &
  \begin{tabular}[c]{@{}c@{}}$P \in \Pi_{n \times n}$\end{tabular} &
  \multicolumn{1}{c}{\begin{tabular}[c]{@{}c@{}}Hungarian  method \\  \\ greedy method \end{tabular}} &
  \multicolumn{1}{c}{\begin{tabular}[c]{@{}c@{}} $O(n^3)$ \\  \\  $O(n^3)$\end{tabular}} &
  \multicolumn{1}{c}{\begin{tabular}[c]{@{}c@{}} lose information\\ when multipe-solution exisits\end{tabular}} &
  \multicolumn{1}{c}{\begin{tabular}[c]{@{}c@{}} IPFP \\ \\ AIPFP \end{tabular}} \\
\multicolumn{1}{c}{} & \multicolumn{1}{c}{} &                                 &                                \\ \hline
\end{tabular}%

}
\caption{Commonly used constraining operators.}
\label{tab:operators}
\end{table}


\section{Scalable softassign method}
This section introduces the softassign method and the vectorized update, explains how to tune a parameter of softassign to balance efficiency and accuracy in large graph matching, and proposes a stable strategy to prevent numerical overflow.

\subsection{Softassign and vectorized update}
A natural choice for addressing the doubly stochastic constraints in problem \eqref{eq.Object_relaxedKB} is to use the doubly stochastic projection as a constraining operator in \eqref{iter.DSPFP}. The doubly stochastic projection finds the nearest doubly stochastic matrix to a given matrix X (the gradient matrix in \eqref{iter.DSPFP}) by solving
\begin{equation}
\mathcal{P}_{D}(X) = \arg \min_{P\in \Sigma_{n \times n}} \|P-X\|_{Fro}.
\label{eq:doubly assign}
\end{equation}
Since $\|X\|_{Fro}^2 = \operatorname{tr}(XX^T)$, it is easy to rewrite \eqref{eq:doubly assign} as a linear assignment problem
\begin{equation}
\mathcal{P}_{D}(X)=\arg \max_{P\in \Sigma_{n \times n}} \ \ \langle P, X \rangle,
\label{eq:linear assignment}
\end{equation}
where $\langle P, X \rangle = \sum P_{ij}X_{ij}$. However, the doubly stochastic projection \cite{lu2016fast} suffers from poor convergence when the numerical value of the input matrix is large \cite{rontsis2020optimal}. 

Softassign, proposed in \cite{kosowsky1994invisible}, can efficiently approximate the solution of \eqref{eq:linear assignment} by solving 
 \begin{equation}
    \mathcal{P}^{\beta}_S(X) = \arg \max_{P\in \Sigma_{n \times n}} \langle P, X \rangle + \frac{1}{\beta}\mathcal{H}(P), \ \ \ \mathcal{H}(P) = -\sum P_{ij} \ln {P_{ij}}.
    \label{eq: entropic assignment}
\end{equation}
where $\beta$ is a parameter (it represents the inverse of temperature in the physical energy model \cite{kosowsky1994invisible}). The advantage of the softassign method lies in its ability to balance efficiency and accuracy through the parameter $\beta$ \cite{cuturi2013Sinkhorn}, which is detailed in the next subsection. Softassign $\mathcal{P}^{\beta}_S(X)$ works as follows.

\begin{align}
\label{eq.oriSoftmax}
   S^{(0)}&=\expo(\beta X),\\
   S^{(t)} &=\mathcal{T}_{c}\left(\mathcal{T}_{r}\left(S^{(t-1)}\right)\right),
\label{eq.oriSoftmax0}
\end{align}
where 
\begin{align}
    \mathcal{T}_{r}(X)_{ij}&=X_{ij}/(\sum_j X_{ij}),\label{eq.oriSoftmax3}
    \\ 
    \mathcal{T}_{c}(X)_{ij}&=X_{ij}/(\sum_i X_{ij}).
    \label{eq.oriSoftmax2}
\end{align}
\eqref{eq.oriSoftmax3} and \eqref{eq.oriSoftmax2} are the normalization operators in rows and columns of a matrix. The exponential step $\expo(X)_{ij} = e^{X_{ij}}$ in \eqref{eq.oriSoftmax} ensures that all elements are positive and increases the differences between the elements. The iterative process in \eqref{eq.oriSoftmax0} is the Sinkhorn method \cite{Sinkhorn1967concerning}.

We adopt a fast implementation of Sinkhorn method \cite{cuturi2013Sinkhorn}:

\begin{equation}
    \begin{array}{cc}
    \mathbf{r}^{(t+1)}  =  {\mathbf{1}} \oslash ({S^{(0)}  \mathbf{c}^{(t)}}),
    \\
    \mathbf{c}^{(t+1)}  =  {\mathbf{1}} \oslash ({(S^{(0)})^{\mathrm{T}}  \mathbf{r}^{(t+1)}}),
    \end{array}
        \label{eq:softassign_sinkhorn}
\end{equation}
where $\oslash$ denotes the element-wise division. The relation between two versions of the Sinkhorn method is that $S^{(t)} = \operatorname{diag}({\mathbf{r}^{(t)}}) S^{(0)} \operatorname{diag}({\mathbf{c}^{(t)}})$ \cite{cuturi2013Sinkhorn}. This fast formula \eqref{eq:softassign_sinkhorn} only updates two $1 \times n$ vectors ${\mathbf{r}^{(t)}}$ and ${\mathbf{c}^{(t)}}$ instead of a $n \times n$ matrix $S^{(t)}$, which is calculated after convergence. Compared to original iterations \eqref{eq.oriSoftmax0}, the vectorized iteration \eqref{eq:softassign_sinkhorn} requires only half of the cost for each iteration.

\subsection{Robustness through parameter tuning}
The selection of $\beta$ determines the efficiency and accuracy of softassign. As $\beta$ increases, $\mathcal{P}^{\beta}_S(X)$ approaches the solution of the doubly stochastic projection \eqref{eq:doubly assign}, which yields a better performance. $\mathcal{P}^{\infty}_S(X)$ is the optimal solution of \eqref{eq:doubly assign}. However, a larger $\beta$ may increase iterations in the Sinkhorn method \cite{cuturi2013Sinkhorn}.

In large graph matching problems, the selection of $\beta$ must consider two key factors: the numerical magnitude of the input matrix and the number of nodes. These two factors have not received much attention in previous works, such as the reweighted random walks matching \cite{2010Reweighted} and the graduated assignment method \cite{gold1996graduated}. In their studies, elements of the input matrix range from 0 to 1; nodes' numbers are less than 200. The limited range of two factors leads to the neglect of their impacts. However, in large graph matching problems, the dimension and magnitude of the input matrix may vary widely. Therefore, we aim to develop a scalable softassign that is invariant to these two factors.

We give an example to demonstrate the effect of the numerical magnitude on softassign with a fixed $\beta$:
$$X_1=\begin{pmatrix}
1 & 1.1\\
1.1 & 1
\end{pmatrix} \Rightarrow \mathcal{P}^{1}_S(X_1)=\begin{pmatrix}
0.48 & 0.52\\
0.52 & 0.48
\end{pmatrix},
$$

$$ X_2=\begin{pmatrix}
20 & 22\\
22 & 20
\end{pmatrix} \Rightarrow \mathcal{P}^{1}_S(X_2)=\begin{pmatrix}
0.12 & 0.88\\
0.88 & 0.12
\end{pmatrix}.
$$
In this example, $X_1$ and $X_2$ have the same direction, but different magnitudes cause different results:  $\mathcal{P}^{1}_S(X_1)$ approaches a matrix whose elements are equal and $\mathcal{P}^{1}_S(X_2)$ approaches the permutation matrix (corresponding to the optimal solution of the assignment problem \eqref{eq:doubly assign}). To eliminate the impact, we normalize the input matrix by its maximum before the exponential operator: $\hat{X} = \frac{X}{\max(X)}$. 


Increasing the dimension of the input matrix may decrease the performance of the softassign with a fixed $\beta$. We illustrate this phenomenon as follows.
\begin{proposition}
Let $ X \in \mathbb{R}^{n \times n}$, $\beta \in \mathbb{R}$, and $Y=\mathcal{P}^{\beta}_{S}(X)$. Then
\begin{equation}
    \lim_{n \to \infty} Y_{ij}=Y_{ab} \quad \text{  for  } \quad i,j,a,b =1, 2, \dots ,n.
\end{equation}
\end{proposition}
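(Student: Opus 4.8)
The plan is to exploit two facts: the softassign output $Y=\mathcal{P}^{\beta}_{S}(X)$ is an honest doubly stochastic matrix obtained by diagonal scaling of the entrywise-positive matrix $\exp(\beta X)$, and the \emph{dynamic range} of $\exp(\beta X)$ is bounded by a constant independent of $n$ as long as the entries of $X$ stay in a fixed bounded interval — which is the regime of interest here, since the input is normalized to $\hat X = X/\max(X)$ just before the proposition, so its entries lie in $[0,1]$. Accordingly I would set $\rho := e^{|\beta|(\max_{ij} X_{ij}-\min_{ij} X_{ij})}\ge 1$ (so $\rho\le e^{|\beta|}$ after the normalization), and record the elementary consequence $\exp(\beta X)_{ij}/\exp(\beta X)_{ab}\le \rho$ for all $i,j,a,b$, which follows from $|\beta(X_{ij}-X_{ab})|\le |\beta|(\max X-\min X)$.

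First I would invoke Sinkhorn's theorem \cite{Sinkhorn1967concerning} (equivalently, the convergence of the iteration \eqref{eq.oriSoftmax}, cf. \cite{rangarajan1999convergence}): because $\exp(\beta X)$ has strictly positive entries, the iteration converges to the unique doubly stochastic matrix of the form $Y_{ij}=u_i\,\exp(\beta X)_{ij}\,v_j$ with $u,v>0$. Then for any row $i$ and columns $j,b$,
\[
\frac{Y_{ij}}{Y_{ib}}=\frac{\exp(\beta X)_{ij}}{\exp(\beta X)_{ib}}\cdot\frac{v_j}{v_b}.
\]
The first factor is at most $\rho$ by the dynamic-range bound. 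For the second I would use column stochasticity $\sum_i Y_{ij}=1$, i.e. $v_j=\bigl(\sum_i u_i\exp(\beta X)_{ij}\bigr)^{-1}$, so that $v_j/v_b=\bigl(\sum_i u_i\exp(\beta X)_{ib}\bigr)\big/\bigl(\sum_i u_i\exp(\beta X)_{ij}\bigr)\le \rho$, again by the entrywise ratio bound applied term by term. Hence $Y_{ij}\le \rho^{2}\,Y_{ib}$ for all $i,j,b$.

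Next I would combine this with double stochasticity once more: each row of $Y$ consists of $n$ nonnegative numbers summing to $1$, so $\min_b Y_{ib}\le 1/n$, and therefore $Y_{ij}\le \max_b Y_{ib}\le \rho^{2}\min_b Y_{ib}\le \rho^{2}/n$ for every $i,j$. Consequently $|Y_{ij}-Y_{ab}|\le Y_{ij}+Y_{ab}\le 2\rho^{2}/n\to 0$ as $n\to\infty$, which is the claim; in fact this gives the stronger statement that every entry of $Y$ tends to $0$ at rate $O(1/n)$.

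The only delicate point — and the main thing I would make explicit — is the standing assumption hidden in the statement: the conclusion requires that the entries of $X$ not grow with $n$, since otherwise $\rho$ itself depends on $n$ and the estimate collapses. This is precisely the reason the magnitude normalization $\hat X=X/\max(X)$ is introduced immediately before the proposition, and I would phrase the hypothesis in those terms. A secondary, purely expository point is justifying that the Sinkhorn limit has the diagonal-scaled form and is unique for positive matrices; this is classical, so I would cite it rather than reprove it, after which the argument is just the short chain of inequalities above.
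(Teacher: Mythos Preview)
Your argument is correct and in fact considerably more complete than the paper's own proof. The paper only sketches the one-dimensional softmax case: for $y_i=e^{\beta x_i}/\sum_k e^{\beta x_k}$ with the $x_i$ bounded, each $y_i$ is $O(1/n)$, and then it asserts without detail that ``the rest of the proof follows easily'' for the full Sinkhorn limit. You instead work directly with the Sinkhorn diagonal-scaling representation $Y_{ij}=u_i\exp(\beta X)_{ij}v_j$ and control the column-scaling ratio $v_j/v_b$ via the same dynamic-range bound, which is exactly the missing step the paper skips. Both arguments rest on the same underlying idea (bounded entries $\Rightarrow$ bounded entrywise ratio $\Rightarrow$ each entry is $O(1/n)$ by stochasticity), but your route makes the dependence on the doubly stochastic structure explicit and delivers the quantitative bound $\max_{ij}Y_{ij}\le \rho^2/n$, whereas the paper's softmax illustration only yields the analogous bound after a single row normalization and leaves the reader to bridge to the iterated case.

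Your closing remark about the hidden boundedness hypothesis is well taken and worth keeping: neither the paper's statement nor its proof makes explicit that the entries of $X$ must remain in a fixed interval as $n$ grows (equivalently, that one is working with the normalized $\hat X$), yet both arguments collapse without it. Making that standing assumption part of the hypothesis, as you propose, is the right fix.
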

\begin{proof}
Consider the \(i_{th}\) row of the matrix \( X \), denoted by \( X_i = [x_1, x_2, \dots, x_n] \in \mathbb{R}^n \). After applying a softassign with only a row normalization, the corresponding row \( \hat{Y}_i = [y_1, y_2, \dots, y_n] \) is 
\begin{equation}
   y_j = \frac{e^{\beta x_j}}{s_n}, \quad \text{where} \quad s_n = \sum_{k=1}^{n} e^{\beta x_k}, \quad \beta \in \mathbb{R}, \quad j = 1, 2, \dots, n. 
\end{equation}
As the number of elements \( n \) increases, the normalization factor \( s_n \) dominates the individual exponential terms, leading to increasingly smaller differences between any two normalized components \( y_i \) and \( y_j \). Therefore, we conclude 
\begin{equation}
\lim_{n \to \infty} |y_i - y_j| = 0 \quad \text{for} \quad i,j = 1, 2, \dots, n.
\end{equation}
Similarly, we can deduce that other rows of \( X \) exhibit the same property. 

Since the sums of rows are normalized to 1, all elements of \( \hat{Y} \) converge to the same value as \( n \to \infty \). Additionally, further row or column normalization in the Sinkhorn process will not alter this result. Thus, as \( n \to \infty \), all components of \( Y_{ij} \) converge to the same value, which completes the proof.
\end{proof}
This proposition indicates that the differences between elements of $\mathcal{P}^{\beta}_{S}(X)$ with a fixed $\beta$ may approach zero in large graph matching problems. Such a $\mathcal{P}^{\beta}_{S}(X)$ is far from the optimal solution $\mathcal{P}^{\infty}_{S}(X)$. 



The softassign algorithm can be robust to changes in the number of nodes $n$ when the distance between $\mathcal{P}^{\beta}_{S}(X)$ and the optimal solution $\mathcal{P}^{\infty}_{S}(X)$ remains invariant with respect to $n$. To achieve that, we first define a metric to assess the distance between $\mathcal{P}^{\beta}_{S}(X)$ and the optimal solution $\mathcal{P}^{\infty}_{S}(X)$ for a given profit matrix $X \in \mathbb{R}^{n \times n}$. The total assignment score of an assignment matrix $\mathcal{P}^{\beta}_{S}(X)$ is $\langle \mathcal{P}^{\beta}_{S}(X), X \rangle$. A total assignment error can be defined as $\langle \mathcal{P}^{\infty}_S(X), X \rangle - \langle \mathcal{P}^{\beta}_S(X), X \rangle$ in optimal transport problems \cite{altschuler2017near}. Since the total assignment error is the sum of individual assignment errors, it is influenced by both the per assignment error and the number of dimensions $n$. To avoid the influence from $n$, we introduce an \textit{average assignment error}
\begin{equation}
    \epsilon^{\beta}_{X} = \frac{1}{n} \langle \mathcal{P}^{\infty}_S(X), X \rangle - \langle \mathcal{P}^{\beta}_S(X),X \rangle
\end{equation}
to quantify the distance between $\mathcal{P}^{\beta}_S(X)$ and the optimal solution $\mathcal{P}^{\infty}_S(X)$.
\begin{proposition}
    Let $\gamma \in \mathbb{R}_+$, $X \in \mathbb{R}^{n \times n}$, the average assignment error for $\mathcal{P}^{\beta}_S(X)$
    \begin{equation}
   \epsilon^{\beta}_{X} \leq\frac{1}{\gamma},
\end{equation}
where $\beta = \gamma\ln(n)$.
\end{proposition}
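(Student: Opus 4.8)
The plan is to work with the variational characterization \eqref{eq: entropic assignment} rather than with the Sinkhorn iteration \eqref{eq.oriSoftmax}: by definition $\mathcal{P}^{\beta}_{S}(X)$ is the exact maximizer over $\Sigma_{n\times n}$ of the entropically regularized assignment objective, while $\mathcal{P}^{\infty}_{S}(X)$ realizes the optimal unregularized score $\max_{P\in\Sigma_{n\times n}}\langle P,X\rangle$. Write $P^{\beta}:=\mathcal{P}^{\beta}_{S}(X)$ and $P^{\infty}:=\mathcal{P}^{\infty}_{S}(X)$. Since $P^{\infty}\in\Sigma_{n\times n}$ is feasible for the regularized problem, optimality of $P^{\beta}$ gives
$$\langle P^{\beta},X\rangle+\frac{1}{\beta}\mathcal{H}(P^{\beta})\;\ge\;\langle P^{\infty},X\rangle+\frac{1}{\beta}\mathcal{H}(P^{\infty}).$$
Every entry of $P^{\infty}$ lies in $[0,1]$, so $\mathcal{H}(P^{\infty})\ge 0$; and because $P^{\infty}$ is optimal for $\max_{P\in\Sigma_{n\times n}}\langle P,X\rangle$ we have $\langle P^{\infty},X\rangle\ge\langle P^{\beta},X\rangle$. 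Combining these,
$$0\;\le\;\langle P^{\infty},X\rangle-\langle P^{\beta},X\rangle\;\le\;\frac{1}{\beta}\,\mathcal{H}(P^{\beta}),$$
which, in particular, removes the absolute value in the definition of $\epsilon^{\beta}_{X}$.

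The remaining step is a uniform upper bound on $\mathcal{H}(P^{\beta})$, independent of $X$. Each of the $n$ rows of $P^{\beta}$ is a probability vector on $n$ outcomes, whose Shannon entropy is at most $\ln n$; summing over the rows yields $\mathcal{H}(P^{\beta})\le n\ln n$ (equivalently, $\frac{1}{n}\mathbf{1}\mathbf{1}^{T}$ is the maximum-entropy point of $\Sigma_{n\times n}$). Dividing the previous display by $n$ and inserting this bound gives $\epsilon^{\beta}_{X}\le\frac{\ln n}{\beta}$; substituting $\beta=\gamma\ln n$ yields exactly $\epsilon^{\beta}_{X}\le\frac{1}{\gamma}$. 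The case $n=1$ is trivial, and for $n\ge 2$ we have $\beta=\gamma\ln n>0$, so $\mathcal{P}^{\beta}_{S}(X)$ is well defined.

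I do not expect a serious obstacle here; the argument is short once the optimality viewpoint is adopted. The only points that require care are bookkeeping: making precise that $\mathcal{P}^{\infty}_{S}(X)$ attains the linear-assignment optimum over $\Sigma_{n\times n}$, so that the sign of $\langle P^{\infty},X\rangle-\langle P^{\beta},X\rangle$ is controlled, and invoking the doubly stochastic entropy bound $\mathcal{H}(\cdot)\le n\ln n$ correctly. It is worth noting that this estimate does not involve the magnitude of $X$ at all, so the bound holds with or without the normalization $\hat{X}=X/\max(X)$; that normalization is needed for numerical stability and to cure the scaling sensitivity illustrated by $X_{1}$ and $X_{2}$, not for this inequality.
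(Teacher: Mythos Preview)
Your proposal is correct and follows essentially the same route as the paper: compare the regularized optimum $P^{\beta}$ against the feasible point $P^{\infty}$ in \eqref{eq: entropic assignment}, drop $\mathcal{H}(P^{\infty})\ge 0$, and bound $\mathcal{H}(P^{\beta})\le n\ln n$ to obtain $\epsilon^{\beta}_{X}\le \ln(n)/\beta=1/\gamma$. Your write-up is in fact slightly more careful than the paper's, since you explicitly justify dropping the absolute value via $\langle P^{\infty},X\rangle\ge\langle P^{\beta},X\rangle$.
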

\begin{proof}

\begin{equation}
    \mathcal{P}^{\beta}_S(X) = \arg \max_{P \in \Sigma_{n\times n}} \ \  \langle P, X \rangle + \frac{1}{\beta}H(P),
\end{equation}
 we have 
\begin{equation}
    \langle \mathcal{P}^{\infty}_S(X), X\rangle+0 \leq  \langle \mathcal{P}^{\beta}_S(X), X \rangle + \frac{1}{\beta}H(\mathcal{P}^{\beta}_S(X)) .
\end{equation}
For $P \in \Sigma_{n \times n}$, the entropy $\mathcal{H}(P)$ reaches its maximum when elements of $P$ are equal to $\frac{1}{n}$. Then we have
\begin{equation}
\mathcal{H}(\mathcal{P}^{\beta}_S(X)) \leq  n^2 (-\frac{1}{n}   \ln(\frac{1}{n}))=n\ln(n).
\end{equation}
Then 
\begin{equation}
    \langle \mathcal{P}^{\infty}_S(X), X\rangle - \langle \mathcal{P}^{\beta}_S(X), X \rangle \leq n(\frac{\ln(n)}{\beta}) =\frac{n}{\gamma},
\end{equation}
which completes the proof.
\end{proof}
This proposition suggests that the performance of softassign with $\beta = \gamma\ln(n)$ is invariant to the nodes' number. The choice of $\gamma$ depends on the type of the matching graphs. An empirical finding indicates that a challenging task requires a large $\gamma$ to ensure a low error bound. For instance, plain graph matching problems normally require a larger $\gamma$ compared to attributed graph matching problems.

\subsection{Stable strategy}
Numerical overflow of softassign is an inevitable issue in large graph matching problems. Using $\hat{X}$ can reduce but not eliminate the risk of overflow. To eliminate it, we modify the step in \eqref{eq.oriSoftmax} by a preprocessing 
\begin{equation}
S^{(0)} = \expo(\beta (\hat{X} - \mathbf{1}_{n \times n})),\ \hat{X} = \frac{X}{\max(X)}.
\label{eq.mod.exp}
\end{equation}
In this stage, the maximum of $S^{(0)}$ is 1, thereby avoiding the risk of overflow. We prove that this preprocessing will not affect the results in the following proposition.
\begin{proposition}
     Let $\beta \in \mathbb{R}$, $X   \in \mathbb{R}^{n \times n}$,  $B 
    \in \mathbb{R}^{n \times n}$ and its all elements are $b \in \mathbb{R}$, then 
\begin{equation}
        \mathcal{P}^{\beta}_S({X}-B) = \mathcal{P}^{\beta}_S({X}).
\end{equation}
\end{proposition}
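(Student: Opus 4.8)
The plan is to show that subtracting a constant matrix $\mathbf{b}_{n\times n}$ from the profit matrix rescales every entry of $\exp(\beta X)$ by a common factor $e^{-\beta b}$, and that the Sinkhorn normalization used inside softassign is invariant under such a uniform multiplicative rescaling. Concretely, I would start from the definition \eqref{eq.oriSoftmax}: if we write $S^{(0)} = \exp(\beta(X-\mathbf{b}_{n\times n}))$, then entrywise $S^{(0)}_{ij} = e^{\beta X_{ij}}e^{-\beta b} = e^{-\beta b}\,\widetilde{S}^{(0)}_{ij}$, where $\widetilde{S}^{(0)} = \exp(\beta X)$ is the initialization for $\mathcal{P}^{\beta}_S(X)$. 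So the two runs of softassign start from matrices that differ only by the positive scalar factor $c = e^{-\beta b}$.

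Next I would argue that the row/column normalization operators $\mathcal{T}_r$ and $\mathcal{T}_c$ are invariant to multiplication by a positive scalar: $\mathcal{T}_r(cX) = (cX)\oslash(cX\mathbf{1}\mathbf{1}^T) = X\oslash(X\mathbf{1}\mathbf{1}^T) = \mathcal{T}_r(X)$ for any $c>0$, and similarly for $\mathcal{T}_c$. Hence after the very first application of $\mathcal{T}_c\circ\mathcal{T}_r$ the scalar factor is cancelled, and $S^{(1)}$ is identical for the two problems; by induction all subsequent iterates coincide, so the limits (or the fixed numbers of iterations, in practice) agree, giving $\mathcal{P}^{\beta}_S(X-\mathbf{b}_{n\times n}) = \mathcal{P}^{\beta}_S(X)$.

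Alternatively, and perhaps more cleanly, I would give a variational proof directly from \eqref{eq: entropic assignment}: for $P\in\Sigma_{n\times n}$ we have $\langle P, X-\mathbf{b}_{n\times n}\rangle = \langle P,X\rangle - b\langle P,\mathbf{1}\mathbf{1}^T\rangle = \langle P,X\rangle - bn$, since every doubly stochastic $P$ satisfies $\sum_{ij}P_{ij}=n$. Therefore the objective $\langle P, X-\mathbf{b}_{n\times n}\rangle + \tfrac1\beta\mathcal{H}(P)$ differs from $\langle P,X\rangle + \tfrac1\beta\mathcal{H}(P)$ by the constant $-bn$, which is independent of $P$; the $\arg\max$ over the fixed feasible set $\Sigma_{n\times n}$ is thus unchanged, proving the claim.

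I do not expect a genuine obstacle here; the only point needing a word of care is the argument's reliance on the fact that all $P\in\Sigma_{n\times n}$ have constant mass $n$ (equivalently, that the Sinkhorn fixed point is unaffected by a global scalar), which is exactly what makes the offset drop out. I would present the variational argument as the main proof for brevity and optionally remark that the same conclusion follows iterate-by-iterate from the scale-invariance of $\mathcal{T}_r$ and $\mathcal{T}_c$, which also explains why the preprocessing in \eqref{eq.mod.exp} is numerically harmless.
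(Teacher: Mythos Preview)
Your proposal is correct. Your first argument (the scalar factor $e^{-\beta b}$ is cancelled by the first row normalization) is exactly the paper's proof: it writes $\mathcal{T}_r(\exp(\beta(X-\mathbf{b}_{n\times n})))=\mathcal{T}_r(\exp(\beta X)/\exp(b))=\mathcal{T}_r(\exp(\beta X))$ and stops there. Your preferred variational argument, using $\langle P,\mathbf{1}\mathbf{1}^T\rangle=n$ for $P\in\Sigma_{n\times n}$ to show the objective in \eqref{eq: entropic assignment} shifts by the constant $-bn$, is a genuinely different route: it bypasses the Sinkhorn iterates entirely and works at the level of the optimization problem. The variational proof is slightly cleaner conceptually and would extend immediately to any definition of $\mathcal{P}^{\beta}_S$ that solves \eqref{eq: entropic assignment}, while the paper's iterate-level argument has the practical virtue of explaining why the preprocessing in \eqref{eq.mod.exp} is numerically harmless already after one normalization, not just in the limit.
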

\begin{proof}
The difference from the $B$ is eliminated in the first iteration of Sinkhorn
\begin{equation}
    \mathcal{T}_{r}(\expo(\beta (X - B)))=\mathcal{T}_{r}(\frac{\expo(\beta X)}{\exp(b)})\\
=\mathcal{T}_{r}(\expo(\beta X)),
\end{equation}
which completes the proof.
\end{proof}
This proposition is an application of the log-sum-exp technique whose rounding error analyzes are studied in \cite{blanchard2021accurately}. An interesting finding is that this strategy also improves final accuracy in certain graph matching tasks. For example, it increased accuracy by approximately 1.5\% in the protein-protein interaction networks matching experiment.


\subsection{Algorithm of scalable softassign}

Based on the improvements, a scalable softassign method is summarized in Algorithm \ref{ag.scalSoftassign}, and the softassign is shown in Algorithm \ref{ag.softassign}.

\begin{minipage}[t]{0.52\textwidth}
\vspace{0pt}
\begin{CJK*}{UTF8}{gkai}
    \begin{algorithm}[H]
        \caption{Scalable Softassign} 
        \begin{algorithmic}[1]
            \Require {$X,\gamma$}
                \State $\hat{X} = \frac{X}{\max(X)}$
                \State $\beta={\gamma\ln{n}}$ 
                \State $\hat{S}=\expo(\beta (\hat{X}  - \mathbf{1}_{n \times n} ))$ 
                \State $\mathbf{r}^{(0)},\mathbf{c}^{(0)} =\mathbf{1}$
                \While{$\mathbf{r}$ and $\mathbf{c}$ does not converge} 
                \State$\mathbf{r}^{(t)}={\mathbf{1}} \oslash ({\hat{S} \mathbf{c}^{(t-1)}})$, $\mathbf{c}^{(t)}={\mathbf{1}} \oslash ({\hat{S}^{T}  \mathbf{r}^{(t)}})$
                \EndWhile
                \State $S= \operatorname{diag}({\mathbf{r}^{(t)}}) \hat{S} \operatorname{diag}({\mathbf{c}^{(t)}})$
            \State \Return{$S$}
        \end{algorithmic}
        \label{ag.scalSoftassign}
    \end{algorithm}
\end{CJK*}
\end{minipage}
\hfill
\begin{minipage}[t]{0.41\textwidth}
\vspace{0pt}
\begin{CJK*}{UTF8}{gkai}
    \begin{algorithm}[H]
        \caption{Softassign} 
        \begin{algorithmic}[1]
            \Require {$X,\beta$}
                \State $S^{(0)}=\expo(\beta X)$ 
                \While{$S$ does not converge}
                \State $S^{(t)} =\mathcal{T}_{c}\left(\mathcal{T}_{r}\left(S^{(t-1)}\right)\right)$
                \EndWhile
            \State \Return{$S$}
        \end{algorithmic}
        \label{ag.softassign}
    \end{algorithm}
\end{CJK*}
\end{minipage}

The Sinkhorn method, Step 5-7, stops when $\|S^{(t)}\mathbf{1}-\mathbf{1}\|_1+\|(S^{(t)})^T\mathbf{1}-\mathbf{1}\|_1 \leq \varepsilon$. This Sinkhorn requires $O\left(\left(\varepsilon\right)^{-2} \log (s / \ell)\right)$ iterations, where $s=\sum_{i j} \hat{S}_{i j} \text { and } \ell=\min _{i j} \hat{S}_{i j}$ \cite[Th.2]{altschuler2017near}. In experiments with approximately 1000 nodes, the number of iterations typically does not exceed 30 for attributed graph matching. 


\section{Analysis of step-size parameter}
This section introduces an adaptive strategy for the step-size parameter of \eqref{iter.DSPFP}. We simplify the computation of the optimal $\alpha$ for algorithms with doubly stochastic constraining operators.
\subsection{Adaptive optimal step-size parameter}
Recall that the iterative formula of constrained gradient method \eqref{iter.DSPFP}  
\begin{equation}
    M^{(t+1)}=(1-\alpha)M^{(t)}+\alpha D^{(t)}.
\end{equation}
 Given $M^{(t)}$ and $D^{(t)}$, we can find the optimal $\alpha$ by a line search such that
\begin{equation}
    \begin{aligned}
(\alpha^*)^{(t)} = \arg \max_{\alpha \in[0,1]}  \quad  \mathcal{Z}((1-\alpha)M^{(t)}+\alpha D^{(t)}),
\end{aligned}
\label{eq.alpha}
\end{equation}
where $\mathcal{Z}$ is the objective function defined in \eqref{eq.Object_fast}. For simplify, the superscript index $(t)$ is ignored in the following derivation: $M$ and $D$ are used to replace $M^{(t)}$ and $D^{(t)}$ respectively; $\nabla \mathcal{Z}$ is used to replace $\nabla \mathcal{Z}(M^{(t)})$.

\begin{equation}
    \begin{aligned}\mathcal{Z}(M^{(t+1)}) &= \mathcal{Z}((1-\alpha)M^{(t)}+\alpha D^{(t)}) 
\\
&=\frac{1}{2} \operatorname{tr}\left(\left((1-\alpha) M^{T} A+\alpha D^{T} A\right)((1-\alpha) M \widetilde{A}+\alpha D\tilde{A}))\right.
\\
&+\lambda \operatorname{tr}\left(\alpha({D^{T} F \tilde{F}^{T}}-{M^{T} F \tilde{F}^{T}})+M^{T} F \tilde{F}^{T}\right)
\\
&=\frac{1}{2} \operatorname{tr}\left((1-\alpha)^{2} \underbrace{M^{T} A M \widetilde{A}}_{M_1}+(1-\alpha) \alpha \underbrace{M^{T} A D \tilde{A}}_{M_2}) \right.+ \alpha (1-\alpha) \underbrace{D^{T} A M\tilde{A}}_{M_3}
\\
& \left. +\alpha^{2} \underbrace{D^{T} A D \widetilde{A}}_{M_4}\right)+\lambda \operatorname{tr}\left(\alpha(\underbrace{D^{T} F \tilde{F}^{T}}_{M_{5}}-\underbrace{M^{T} F \tilde{F}^{T}}_{M_{6}})+M^{T} F \tilde{F}^{T}\right)
\\
&= \alpha^{2} (\underbrace{\frac{1}{2}\operatorname{tr}\left(M_{1}-M_{2}-M_{3}+M_{4}\right)}_{a_{\alpha}})+\alpha\underbrace{\operatorname{tr}(- M_{1}+M_{2}+\lambda (M_5-M_6)}_{b_{\alpha}})
\\
& + \underbrace{\operatorname{tr}\left(M_{1}+M_6\right)}_{c_{\alpha}}
\\
&=a_{\alpha} \alpha^{2}+ b_{\alpha}\alpha+c_{\alpha}
.\end{aligned}
\label{eq.alpha_general}
\end{equation}
The optimal step size parameter of each iteration can be obtained by maximiing such a quadratic problem. 


Equipped with the optimal step-size parameter in \eqref{eq.alpha}, constrained gradient algorithms have the following properties:
\begin{property}
The objective score $\mathcal{Z}(M^{(t)})$ is increasing at every step $t$.
\end{property}
\begin{proof}
Because the adaptive strategy of $\alpha$ guarantees that $\mathcal{Z}(M^{(t)}) \geq \mathcal{Z}(M^{(t-1)})$, we have
$\mathcal{Z}(M^{(t)}) \geq \mathcal{Z}(M^{(t-1)}) \geq \mathcal{Z}(M^{(t-2)}) \dots \mathcal{Z}(M^{(0)})$.
\end{proof}

\begin{property}
Constrained gradient algorithms with adaptive $\alpha$ converge to a maximum of the problem \eqref{eq.Object_relaxedKB}.
\end{property}
\begin{proof} If the sequence $(\mathcal{Z}(M^{(t)})$ is monotonic and bounded, then it is convergent. The monotony of the sequence is shown in property 1. The bound of the sequence also exists because $\mathcal{Z}(M) < \mathcal{Z}(\mathbf{1}_{n \times n})$ \text{for} where $\mathbf{1}_{n \times n}$ is a $n\times n$ matrix whose elements are 1. 
\end{proof}

\begin{property}
If all eigenvalues of two affinity matrices $A$ and $\tilde{A}$ are simultaneously positive (or negative), the optimal step-size parameter is 1.
\end{property}
\begin{proof} Firstly, if all eigenvalues of two affinity matrices $A$ and $\tilde{A}$ are positive (or negative), then the eigenvalues of $A \otimes \tilde{A}$ are also positive, i.e., $A \otimes \tilde{A}$  is a positive definite matrix. It follows that $a_{\alpha} = (\mathbf{m}-\mathbf{d})^TA \otimes \tilde{A} (\mathbf{m}-\mathbf{d}) \geq 0$ where $\mathbf{m} = \operatorname{vec}(M)$ and $\mathbf{d} = \operatorname{vec}(D)$, which means the optimal step-size parameter is 1.
\end{proof}


\begin{remark}
Two parameters $a_{\alpha}$ and $b_{\alpha}$ can also be obtained through
$a_{\alpha} = (\mathbf{m}-\mathbf{d})^T(A\otimes\tilde{A})(\mathbf{m}-\mathbf{d})$ and $b_{\alpha} = \mathbf{m}^T(A\otimes\tilde{A})(\mathbf{d}-\mathbf{m})$ \cite{leordeanu2009integer}. However, the computational cost of computing $\alpha$ based on this vector form is $O(n^4)$.
\end{remark}

\subsection{Optimal step-size parameter for doubly stochastic constraining operators}
The line search of $\alpha \in [0,1]$ can be simplified if the $D$ in \eqref{eq.alpha_general} results from a doubly stochastic constraining operator. Such a $D$ is an exact solution or an approximating solution (softassign) of the linear assignment problem 
\begin{equation}
D = \underset{X \in \Sigma_{n \times n}}{\arg \max}  \operatorname{tr}\left(X^{T}\nabla \mathcal{Z}\right).
\end{equation}
This implicits that $b_{\alpha} = \operatorname{tr}(D^{T}\nabla \mathcal{Z}-M^{T}\nabla\mathcal{Z})>0$ in \eqref{eq.alpha_general}. When $a_{\alpha}$ in \eqref{eq.alpha_general} is positive, the quadratic function \eqref{eq.alpha_general} is concave up, shown in Figure \ref{Fig.alpha_a} (a). Therefore, when $a_{\alpha}$ is positive, we can obtain $\alpha^*=1$ without the value of $b_{\alpha}$ . When $a_{\alpha}$ is negative, there are two cases shown in Figure \ref{Fig.alpha_a} (b).

\begin{figure}[h]
\centering  
    \subfigure[Positive $a_{\alpha}$]{
        \includegraphics[width=0.4\textwidth]{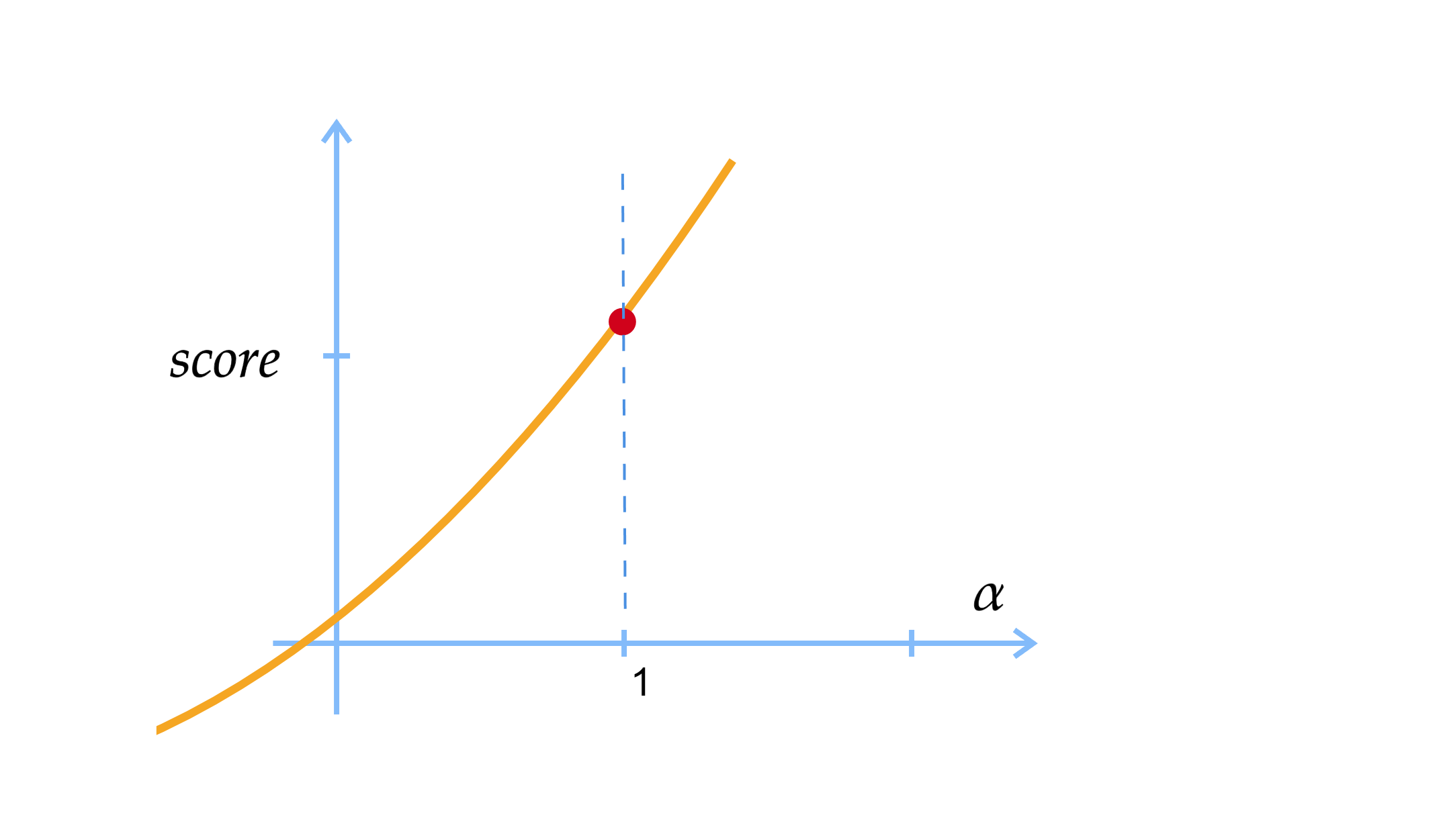}
    }
        \subfigure[Negative $a_{\alpha}$]{
        \includegraphics[width=0.55\textwidth]{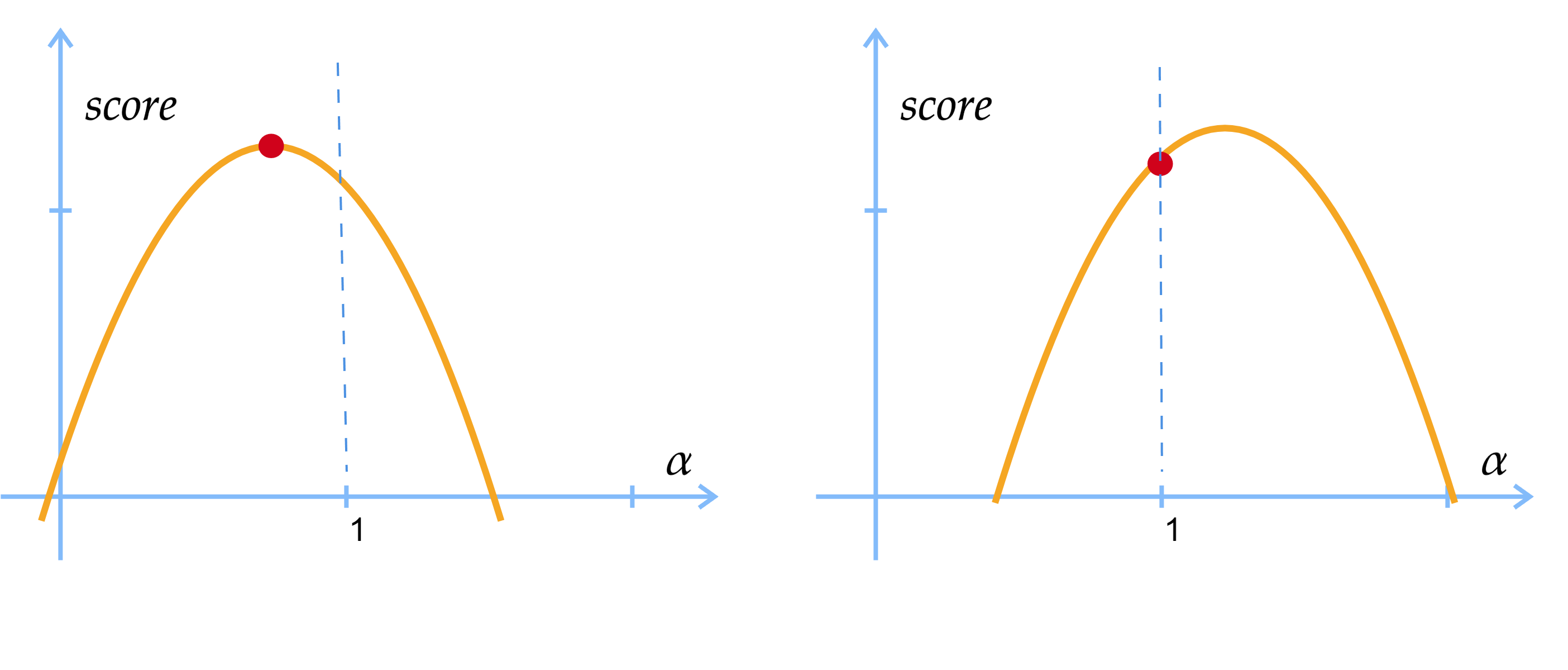}
    }

\caption{The relation between $\alpha$ and objective score. The red point represents the score with optimal $\alpha$.}
\label{Fig.alpha_a}
\end{figure}


In summary, if a doubly stochastic constraining operator is adapted in \eqref{iter.DSPFP}, the optimal step-size parameter can be simplified as follows
\begin{equation}
     \alpha^{(t)}=\left\{\begin{array}{cc}
1 & a_{\alpha}^{(t)} \geqslant 0, \\
\min \left(-\frac{b_{\alpha}^{(t)}}{2 a^{(t)}_{\alpha}}, 1\right) & a_{\alpha}^{(t)}<0.
\end{array}\right.
\label{eq:alpha_doubly}
\end{equation}
$b_{\alpha}^{(t)}$ is not necessary to compute when $a_{\alpha}^{(t)}$ is positive. This finding simplifies the computation of the optimal step-size parameter. Experiments show that $a_{\alpha}^{(t)}$ is positive in most cases, which further enhances the importance of this finding. The convenient strategy can be extended to constrained gradient methods for Lawler QAP.

\section{Constrained-Softassign Gradient Optimization Algorithm}
In this section, we propose a warm-start strategy for constrained gradient algorithms, introduce CSGO for large graph matching problems, and analyze its robustness.

\subsection{Warm-start strategy}
In the absence of prior matching information, the variable is typically initialized as a matrix where each element is set to $\frac{1}{n}$ i.e., $M^{(0)}= \frac{1}{n}\mathbf{1}\otimes \mathbf{1}^T$, indicating that the matching probability between any two nodes is equal \cite{leordeanu2009integer, lu2016fast}. We propose a warm-start approach under this initialization that enables rapid computation of the constraint gradient matrix during the first iteration of constrained gradient algorithms:
\begin{align}
    \nabla \mathcal{Z}(M^{(0)}) &= A(\frac{1}{n}\mathbf{1}\otimes \mathbf{1}^T)\tilde{A}+ K
    \\
    &=\frac{1}{n}(A\mathbf{1}) \otimes (\mathbf{1}^T\tilde{A})+ K
    \\
    &=\frac{1}{n}(A\mathbf{1}) \otimes (\tilde{A}\mathbf{1})^T+ K.
\end{align}
This strategy reduces the cost from $\mathcal{O}(n^3)$ to $\mathcal{O}(n^2)$. It notably enhances the efficiency of constrained gradient algorithms in attribute graph matching tasks, as algorithms typically converge within 3-5 iterations.


\subsection{Algorithm}
The CSGO is shown in Algorithm \ref{ag.CSGO}. Without loss of generality, we assume that $n<\tilde{n}$. For the problem of matching graphs with different nodes' numbers, \citet{gold1996graduated} introduce a slack square matrix like ${D}$ in step 3: ${D}_{(1:n,1:\tilde{n})} =A{N} \tilde{A}+\lambda {K}$ and the remaining elements of ${D}$ are zero. The updates of $N$ only requires ${D}_{(1:n,1:\tilde{n})}$ in step 6.

\begin{CJK*}{UTF8}{gkai}
    \begin{algorithm}[H]
        \caption{Sofsassign constrained gradient method}
        \begin{algorithmic}[1] 
            \Require $A,\tilde{A},K,\lambda,\gamma$
            \Ensure $M$       
                 \State Initial $D = \mathbf{0}_{n \times \tilde{n}}$
                 \While{$N$ does not converges} 
                 \If{interation = 1}{\\ \quad \quad\ \ ${D_{(1:n,1:\tilde{n})} = \frac{1}{n}(A\mathbf{1}) \otimes (\tilde{A}\mathbf{1})^T+ K}$}
                  \Else{\\ \quad \quad\ \ $D_{(1:n,1:\tilde{n})} =AN\tilde{A}+\lambda K$}
                   \EndIf

                    \State $D= \text{Scalable Softassign}(D,\gamma)$ by Algorithm 1
                    \State Compute the optimal $\alpha$ by \eqref{eq:alpha_doubly}
                    \State $N = (1-\alpha) N + \alpha D_{(1:n,1:\tilde{n})}$
                \EndWhile
                \State $Discretize$ $N$ $to$ $obtain$ $M$
                \State \Return{$M$}
        
        \end{algorithmic}
        \label{ag.CSGO}
    \end{algorithm}
\end{CJK*}

For $n =\tilde{n}$, Step 4 requires $O(n^2)$ operations.
Step 6 and Step 9 require $O(n^3)$ operations per iteration regardless of fast and sparse matrix computation. Step 8 requires $O({Tn^2})$ operations where $T$ is the number of iterations in the Sinkhorn method (discussed in the previous part). Step 12 transforms the doubly stochastic matrix $N$ back to a matching matrix $M$ using the Hungary method, which requires $O(n^3)$ operations in the worst-case scenario. In conclusion, this algorithm has time complexity $O(n^3+Tn^2)$ per iteration and space complexity $O(n^2)$.

\subsection{Robustness analysis}
This subsection analyzes the robustness of CSGO in response to scaling changes and offsets applied to the affinity matrix or feature matrix. The findings are encapsulated in the Theorem 1, whose proof is based on Lemmas 1 and 2.

\begin{theorem}
The matching formula 
\begin{equation}
\min_{M \in \Pi_{n \times n}} \frac{1}{2}\left\|A-M \widetilde{A} M^{T}\right\|_{F}^{2}+\lambda\|F-M \widetilde{F}\|_{F}^{2} 
\end{equation}
is invariant to a scaling change and offset on the adjacency matrix and feature matrices of graphs.
\end{theorem}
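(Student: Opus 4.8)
The plan is to convert the Frobenius-norm objective into the trace form used earlier in the excerpt and then apply Theorems \ref{lemma.edge} and \ref{lemma.node} termwise. First I would recall the identity $\|X\|_F^2 = \operatorname{tr}(XX^T)$, already invoked in the derivation of \eqref{eq.Object_fast}, to rewrite
\begin{equation}
\min_{M \in \Pi_{n \times n}} \frac{1}{4}\left\|A-M \widetilde{A} M^{T}\right\|_{F}^{2}+\lambda\|F-M \widetilde{F}\|_{F}^{2}
\end{equation}
in the equivalent maximization form
\begin{equation}
\max_{M \in \Pi_{n \times n}} \ \frac{1}{2}\operatorname{tr}\left(M^T A M \widetilde{A}\right) + \lambda \operatorname{tr}\left(M^T F \widetilde{F}^T\right),
\end{equation}
using that $\|A\|_F^2$, $\|M\widetilde{A}M^T\|_F^2 = \|\widetilde{A}\|_F^2$, $\|F\|_F^2$, and $\|M\widetilde{F}\|_F^2 = \|\widetilde{F}\|_F^2$ are all constants over $M \in \Pi_{n\times n}$ (permutation matrices are orthogonal). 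This reduces the claim to showing that the two trace terms are jointly invariant under the prescribed transformations.

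Next I would apply the substitution $\widetilde{A} \mapsto u\widetilde{A} + q\mathbf{1}\mathbf{1}^T$ and $\widetilde{F} \mapsto u'\widetilde{F} + q'\mathbf{1}\mathbf{1}^T$ (here I would be careful about whether the paper intends the feature-matrix offset to act on $\widetilde{F}$ or on $\widetilde{F}^T = K$-side; I would state it in the form matching Theorem \ref{lemma.node}). By Theorem \ref{lemma.edge}, the edge term $\frac12\operatorname{tr}(M^T A M \widetilde{A})$ transforms into $\frac{u}{2}\operatorname{tr}(M^T A M \widetilde{A})$ plus an $M$-independent constant; by Theorem \ref{lemma.node}, the node term $\lambda\operatorname{tr}(M^T F \widetilde{F}^T)$ transforms into $\lambda u'\operatorname{tr}(M^T F \widetilde{F}^T)$ plus an $M$-independent constant. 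Since $u, u' > 0$, each term is scaled by a positive constant and shifted by a constant, so the $\arg\max$ set is unchanged — first termwise, and then for the sum, provided one chooses a common positive scaling (or simply observes that scaling $A$ or $F$ by positive constants and $\lambda$ by a positive constant preserves the maximizer, which I would note explicitly so the "scaling change on the adjacency matrix and feature matrices" is covered as well, not only the offsets).

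The one genuine subtlety — and the step I expect to be the main obstacle — is bookkeeping the scaling on \emph{both} graphs and on \emph{both} of $A$ and $\widetilde{A}$ (likewise $F,\widetilde{F}$) simultaneously, since Theorems \ref{lemma.edge} and \ref{lemma.node} as stated only transform $\widetilde{A}$ and $\widetilde{F}$. I would handle this by noting that a scaling/offset on $A$ is obtained by the same argument with the roles of the two graphs swapped (the edge trace is symmetric in the sense that $\operatorname{tr}(M^T A M\widetilde{A}) = \operatorname{tr}((M^T)^T \widetilde{A} M^T A)$ after relabeling, or more directly $\operatorname{tr}(M^T(uA+q\mathbf{1}\mathbf{1}^T)M\widetilde{A}) = u\operatorname{tr}(M^TAM\widetilde{A}) + q\operatorname{tr}(\mathbf{1}\mathbf{1}^T M \widetilde{A} M^T)$ and $\operatorname{tr}(\mathbf{1}\mathbf{1}^T M\widetilde{A}M^T) = \mathbf{1}^T M\widetilde{A}M^T\mathbf{1} = \mathbf{1}^T\widetilde{A}\mathbf{1}$ is constant because $M^T\mathbf{1}=\mathbf{1}$), and similarly for $F$. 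Composing these independent invariances — edge scaling, edge offset on each graph, node scaling, node offset on each graph, together with positivity of $\lambda$'s scaling — gives the full statement; the proof is then essentially a one-line appeal to the two preceding theorems plus the orthogonality remark, with the composition spelled out.
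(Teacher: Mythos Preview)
Your proposal is correct and is exactly the paper's (implicit) approach: the paper does not give a separate proof of this theorem but simply places it after Theorems~\ref{lemma.edge} and~\ref{lemma.node}, treating it as their immediate combination via the equivalence of \eqref{eq.object 1} and \eqref{eq.Object_fast}. Your bookkeeping of the symmetric case (scaling/offset on $A$ and $F$ rather than $\widetilde{A}$ and $\widetilde{F}$) and your observation that independent scalings $u\neq u'$ would effectively change $\lambda$ are more careful than anything the paper spells out, but the underlying route is the same.
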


\begin{lemma}
For the edge part of the matching formula
\begin{equation}
\max_{{M \in \Pi_{n \times n}}}\quad \frac{1}{2} \operatorname{tr}\left({M}^{T} {A} {M} {\widetilde{A}}\right)
\label{eq.lemma.edge.ori}
\end{equation}
is invariant to a scaling change and offset on $A$. In other words, the problem \eqref{eq.lemma.edge.ori} is equivalent to 
\begin{equation}
\max_{{M \in \Pi_{n \times n}}}\quad \frac{1}{2} \operatorname{tr}\left({M}^{T} {A} {M} {(u\widetilde{A}+{q}\mathbf{1}\mathbf{1}^T)}\right)
\label{eq.lemma.edge.var}
\end{equation}
where $u$ is an arbitrary positive constant, and $q$   is an arbitrary constant.
\label{lemma.edge}
\end{lemma}

\begin{proof}
$$
\begin{aligned}
\operatorname{tr}\left({M}^{T} {A} {M} {(u\widetilde{A}+{q}\mathbf{1}\mathbf{1}^T)}\right)&=\operatorname{tr}\left( u{M}^{T} {A} {M} \widetilde{A} +{M}^{T} {A} {M}({q}\mathbf{1}\mathbf{1}^T) \right)
\\
&=\operatorname{tr}\left( u{M}^{T} {A} {M} \widetilde{A}\right) + \operatorname{tr}\left( {M}^{T} {A} {M}({q}\mathbf{1}\mathbf{1}^T) \right)
\\
&=u\operatorname{tr}\left( {M}^{T} {A} {M} \widetilde{A}\right) + \operatorname{tr}\left( {A} ({q}\mathbf{1}\mathbf{1}^T) \right).
\end{aligned}
$$
Because $\operatorname{tr}\left( {A} ({q}\mathbf{1}\mathbf{1}^T) \right)$ is a constant  and $u$ is a positive number, it is easy to deduce that two problems are equivalent.
\end{proof}

\begin{lemma}
The node term of the matching formula
\begin{equation}
\max_{{M \in \Pi_{n \times n}}}\quad\operatorname{tr}\left({M}^{T}{F \tilde{F}}\right)
\label{eq.lemma.node.ori}
\end{equation}
has an invariant property concerning the offset of $F$.  In other words, such a problem is equivalent to 
\begin{equation}
\max_{{M \in \Pi_{n \times n}}}\quad\operatorname{tr}\left({M}^{T}{F (u\tilde{F}+q\mathbf{1}\mathbf{1}^T)}\right),
\label{eq.lemma.node.var}
\end{equation}
where $u$ is an arbitrary positive constant, and $q$   is an arbitrary constant.
\label{lemma.node}
\end{lemma}

\begin{proof}
$$
\begin{aligned}
\operatorname{tr}\left({M}^{T}{F (u\tilde{F}+q)}\right) &= \operatorname{tr}\left({M}^{T}F (u\tilde{F}) +  {M}^{T}F (q\mathbf{1}\mathbf{1}^T)\right)
\\
&=\operatorname{tr}\left(u{M}^{T}F \tilde{F}\right) + \operatorname{tr}\left({M}^{T}F (q\mathbf{1}\mathbf{1}^T)\right)
\\
&=u\operatorname{tr}\left({M}^{T}F \tilde{F}\right) + \operatorname{tr}\left(F (q\mathbf{1}\mathbf{1}^T)\right).
\end{aligned}
$$
Because $\operatorname{tr}\left( {A} ({q}\mathbf{1}\mathbf{1}^T) \right)$ is a constant  and $u$ is a positive number, it is easy to deduce that two problems are equivalent.
\end{proof}
The scaling change of the affinity matrix is common. For example, if two pictures with different sizes contain the same objects, then the graphs created from the pictures may have the same structure. Assume that the scale ratio between two images is $u$. If the weight of the edge is the Euclidean distance between the corresponding nodes, then $A = u \tilde{A}$. 
Moreover, offset change may happen in the network graph matching problem. Let a graph $G$ represent a social network of a group at time $0$, and the weight of an edge is the duration of a relation between two individuals. The social network of a group at time $q$ is represented by $\tilde{G}$. If there is no structural change during this time period $q$, then $A + q\mathbf{1}\mathbf{1}^T = \tilde{A}$.

\section{Experiment}
\begin{table}[]
\centering
\label{tab:dataset}
\resizebox{\columnwidth}{!}{%
\begin{tabular}{lcccccc}
\hline
\textbf{Dataset} &
  $|V|$ &
  $|E|$ &
  \textbf{Attributed nodes} &
  \multicolumn{1}{l}{\textbf{Weighted edges}} &
  \textbf{Ground-truth} &
  \textbf{Description} \\ \hline
Real-world pictures &
  (100,1000) &
  (4 950, 499 500) &
  \textcolor{green}{\ding{51}} &
  \textcolor{green}{\ding{51}} &
  \textcolor{orange}{\ding{55}} &
  covering five common picture transformations \\
CMU House &
  (600,800) &
  (179 700, 319 600) &
  \textcolor{orange}{\ding{55}} &
  \textcolor{green}{\ding{51}} &
  \textcolor{orange}{\ding{55}} &
  a sequence of house pictures \\
Facebook-ego &
  4 039 &
  88 234 &
  \textcolor{orange}{\ding{55}} &
  \textcolor{orange}{\ding{55}} &
  \textcolor{green}{\ding{51}} &
  nodes: users; edges: relations \\
Protein network &
  1 004 &
  4 920 &
  \textcolor{orange}{\ding{55}} &
  \textcolor{orange}{\ding{55}} &
  \textcolor{green}{\ding{51}} &
  nodes: proteins; edges: interactions \\
Random generated graph &
  100 &
  300 &
  \textcolor{orange}{\ding{55}} &
  \textcolor{orange}{\ding{55}} &
  \textcolor{green}{\ding{51}} &
  delaunay triangulation \\ \hline
\end{tabular}%
}
\caption{Datasets used in experiments. $|V|$ is the number of nodes and  $|V|$ is the number of vertices. ( , ) represents a range.}
\end{table}
We evaluate the proposed algorithm CSGO\footnote{The code of CSGO is available at https://github.com/BinruiShen/CSGO.} and other contributions from the following aspects:

\begin{itemize}[leftmargin=1.5cm]
    \item Q1. Compared to another doubly stochastic constraining operator, how efficient is scalable softassign?
    \item Q2. Compared to other constrained gradient methods, what advancements does CSGO offer in attributed graph matching tasks?
    \item Q3. How robust is CSGO in plain graph matching tasks?
    \item Q4. To what extent do constrained gradient algorithms benefit from the adaptive step size parameter?
    \item Q5. How does the warm-start strategy benefit CSGO?
\end{itemize}

\textbf{ Setting} For CSGO, we set $\gamma = 10$ for attributed graph matching tasks and $\gamma = 60$ for plain graph matching tasks. With respect to the regularization parameter $\lambda$, according to \cite{lu2016fast}, the result is not sensitive to $\lambda$. For simplicity, we always use $\lambda =1$ in this paper.  All experiments are done in Python 3 with a single thread in an i7 2.80 GHz PC.

\textbf{Criteria} We evaluate the accuracy of algorithms in different cases with three criteria. 
\begin{itemize}[leftmargin=1.5cm]
\item Matching error:
\begin{equation}
    \frac{1}{2}\left\|A-M \widetilde{A} M^{T}\right\|_{Fro}+\left\|F-M \tilde{F}\right\|_{Fro}.
    \label{eq:matching error}
\end{equation}
For graphs with only affinity matrices, the measurement only contains the first term of \eqref{eq:matching error}.
\item Error rate: when matching errors of algorithms are close, it is better to use the error rate: the ratio between the matching error of an algorithm and the proposed CSGO.
\item Accuracy: $\frac{n_c}{n}$ where $n_c$ represents the number of correct matching nodes.
\end{itemize}


\textbf{Baselines} include constrained gradient algorithms such as DSPFP \cite{lu2016fast}, GA \cite{gold1996graduated}, and AIPFP \cite{leordeanu2009integer,lu2016fast}; optimal transport methods such as GWL \cite{xu2019gromov} and S-GW \cite{xu2019scalable}; and a spectral-based algorithm, GRASP \cite{hermanns2023grasp}. It is important to note that GA is a softassign-based algorithm.  Table \ref{tab:dataset} gathers the characteristics of the data sets used in this paper. Further details about these algorithms are shown as follows.
\begin{itemize}[leftmargin=1.5cm]
    \item DSPFP \cite{lu2016fast} is a fast doubly stochastic projected fixed-point method with an alternating projection.
    \item GA \cite{gold1996graduated} is a softassign-based method with an outer annealing process. 
    \item AIPFP \cite{leordeanu2009integer,lu2016fast} is an integer projected fixed point method with a fast greedy integer projection.
    \item GWL \cite{xu2019gromov} measures the distance between two graphs by Gromov-Wasserstein discrepancy and matches graphs by optimal transport.
    \item S-GWL \cite{xu2019scalable} is a scalable variant of GWL. It divides matching graphs into small graphs to match. (In Facebook network matching, we modify a parameter of S-GWL, increasing beta from 0.025 to 1, to avoid failure of partitioning; otherwise, S-GWL becomes a very slow GWL).
    \item GRASP \cite{hermanns2023grasp} aligns nodes based on functions derived from Laplacian matrix eigenvectors.
\end{itemize}
Unless otherwise specified, these algorithms are executed using the default parameters defined in the papers.



\subsection{Comparisons between doubly stochastic constraining operators}
This subsection compares the scalable softassign and the alternating projection in different magnitudes. The profit matrix $X$ is created by
\begin{equation}
    X = \phi X_{ran},
\end{equation}
where $X_{ran}$ a random matrix and $\phi$ represents magnitude. This set of experiments considers $\phi = [1,10,100]$. $\phi = 1$ is common for plain graph matching problems;  $\phi$ varies widely in attributed graph matching problems.

\begin{figure}[htbp]
    \centering
    \subfigure[$\phi =1$]{
        \includegraphics[width=0.3\textwidth]{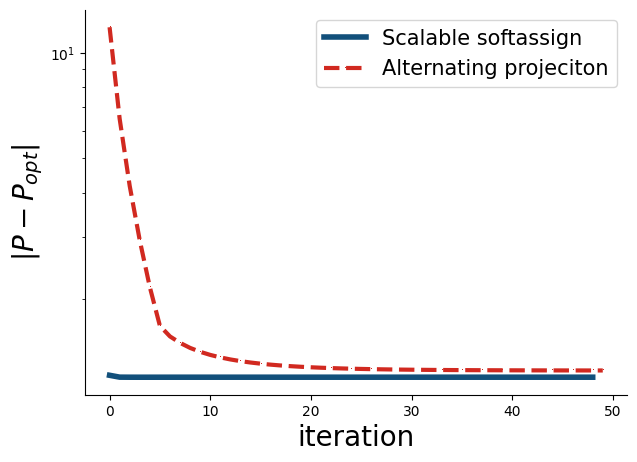}
    }
    \subfigure[$\phi =10$]{
	\includegraphics[width=0.3\textwidth]{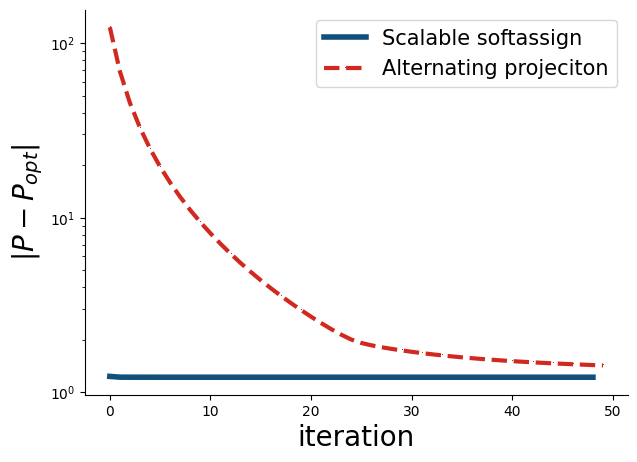}
    }
    \subfigure[$\phi =100$]{
	\includegraphics[width=0.3\textwidth]{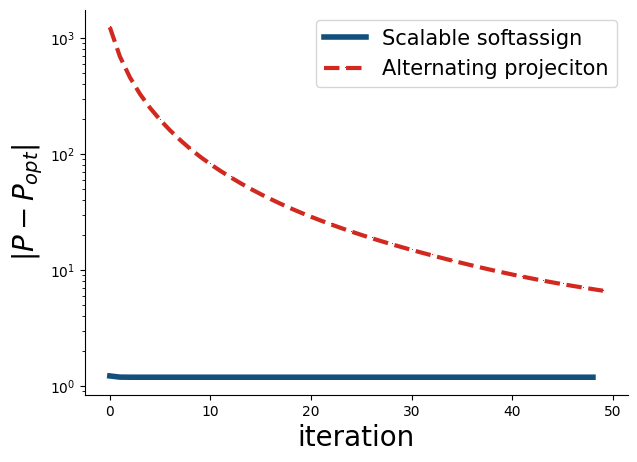}
    }
    \caption{The plots of differences between the optimal solution and the updated matrix with respect to a number of iterations. Here, $P$ is the matrix computed at the current iteration; $P_{opt}$ is the optimal solution computed by the simplex method.}
    \label{Fig.operators}
\end{figure}

Figure \ref{Fig.operators} demonstrates that the scalable softassign algorithm is invariant to magnitude. When $\phi=1$, the alternating projection converges rapidly. However, in the other two cases, the alternating projection does not converge within 50 iterations. 


\subsection{Algorithms with different constraining operators}
In this subsection, we compare the performance of the proposed CSGO with other state-of-the-art constrained gradient algorithms: GA \cite{gold1996graduated}, AIPFP \cite{leordeanu2009integer, lu2016fast} and  DSPFP \cite{lu2016fast}. Graphs are extracted from real-world images and house image sequences.  To avoid the influence of the step size parameter, we set $\alpha = 1$ for all algorithms, which is almost always the best choice in such two tasks.
\subsubsection{Real-world images}
In this set of experiments, the attributed graphs are constructed from a public dataset\footnote{http://www.robots.ox.ac.uk/~vgg/research/affine/}. This data set, which contains eight sets of pictures, covers five common picture transformations: viewpoint changes, scale changes, image blur, JPEG compression, and illumination. 

The construction consists of three primary steps.

\begin{enumerate}[label=(\arabic*), leftmargin=1.5cm] 
    \item  Node extraction: the scale-invariant feature transform method (SIFT) \cite{lowe2004distinctive} is used to identify key points as potential nodes and extract the corresponding feature vectors.
    \item  Node selection: the selected nodes should exhibit a high degree of similarity (i.e. the inner product of feature vectors) to all the other candidate nodes of the other graph.
    \item Edge weight calculation: these nodes are fully connected by edges with weights, each with a weight representing the Euclidean distance between the corresponding nodes.
\end{enumerate}

 The running time and matching error are calculated by the average results of five matching pairs (1 vs. 2, 2 vs. 3 \dots 5 vs. 6) from the same picture set (except set \textit{ bike}, we only record the results of 1 vs. 2, as other pictures in the set do not have enough key points). The matching results are visualized in Figure \ref{Fig.real matching}. The numerical results are shown in Figure \ref{Fig.time} and Figure \ref{Fig.error}. 
 
\begin{figure}[htbp]
    \centering
    \subfigure[Boat]{
        \includegraphics[width=0.8\textwidth]{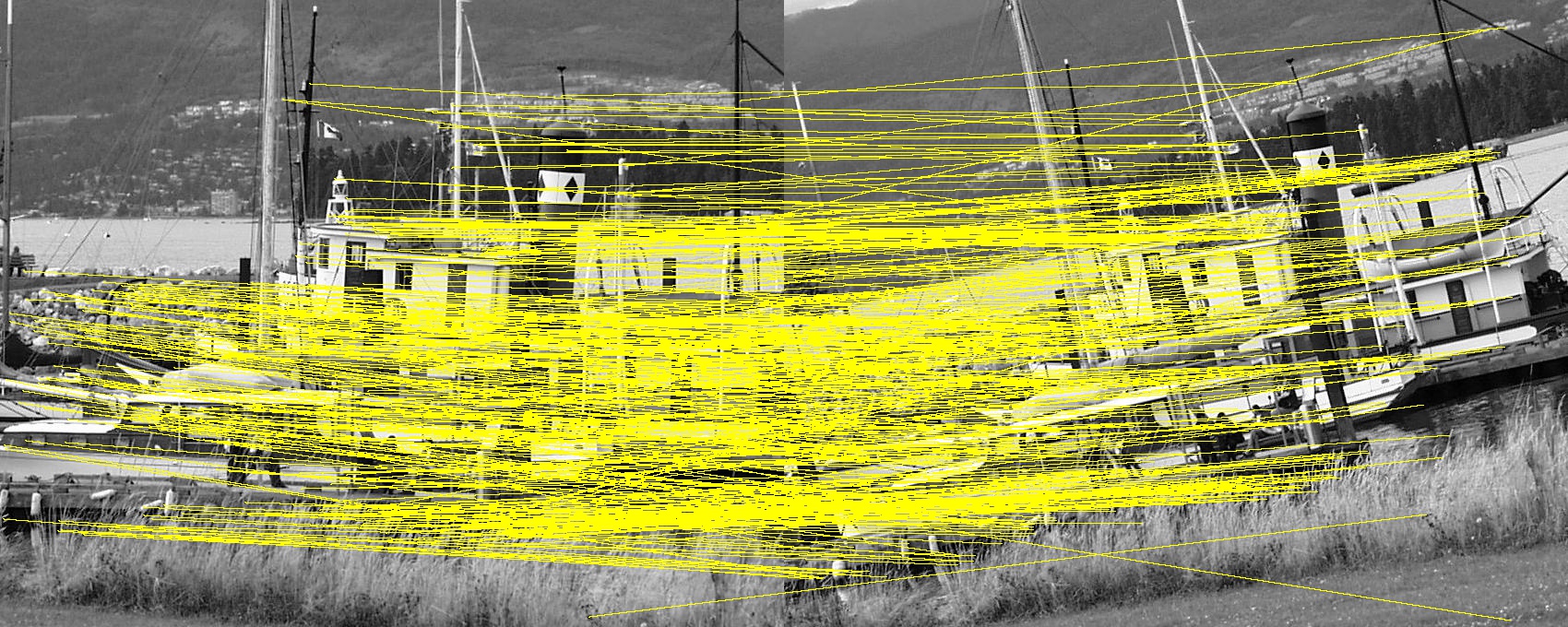}
    }

    \subfigure[Wall]{
	\includegraphics[width=0.8\textwidth]{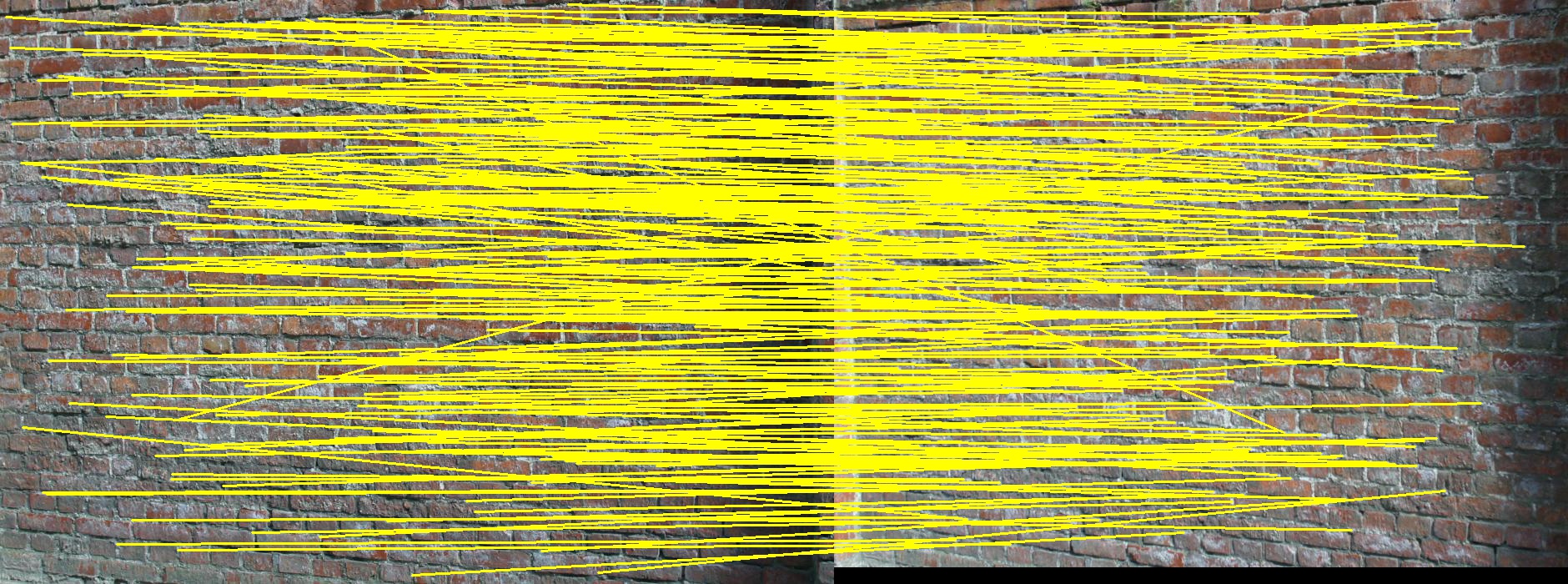}
    }
    \caption{Graphs from real-world images.}
    \label{Fig.real matching}
\end{figure}

\begin{figure}[h]
\centering  
\includegraphics[width=1\textwidth]{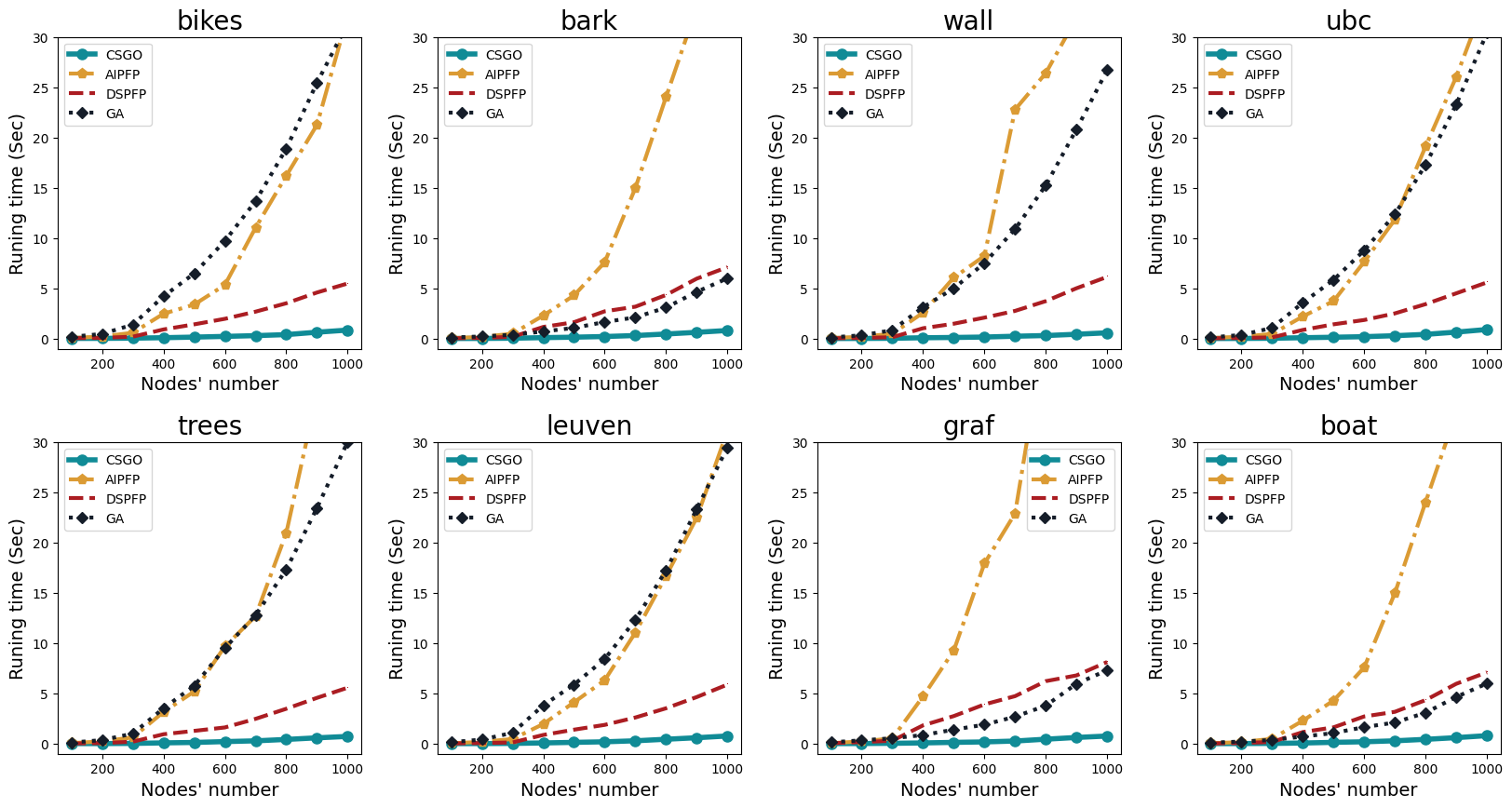} 
\caption{Matching time of constrained gradient algorithms in attributed graph matching experiments.}
\label{Fig.time}
\end{figure}

\begin{figure}[h]
\centering  
\includegraphics[width=1\textwidth]{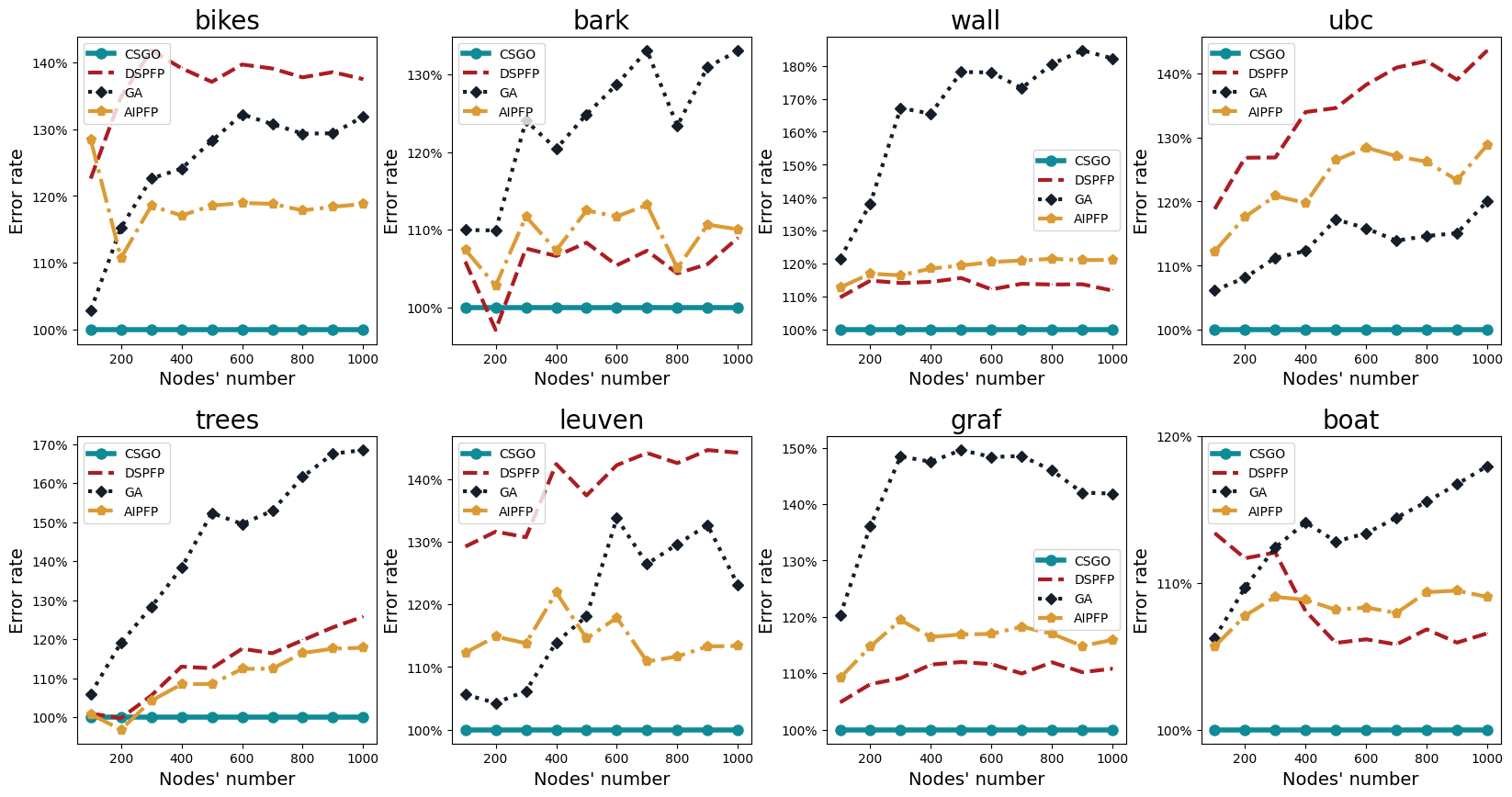} 
\caption{Matching error of constrained gradient algorithms in attributed graph matching experiments.}
\label{Fig.error}
\end{figure}

Table \ref{tab:real-world} clearly shows that CSGO is at least ten times faster than other algorithms for this task. Figure \ref{Fig.time} further illustrates that CSGO consistently exhibits a significant efficiency advantage across all experimental groups and scales. Additionally, Table \ref{tab:real-world} indicates that, on average, CSGO's matching error is at least 10\% lower than that of other algorithms. Figure \ref{Fig.error} provides a detailed comparison, showing the overall superiority of CSGO over other algorithms in each experimental group.



\subsubsection{Image sequence}

Graphs with weighted edges are constructed from the CMU House sequence images\footnote{http://vasc.ri.cmu.edu/idb/html/motion/} in this set of experiments. The SIFT extracts the key points  (around 700) as nodes of the graph. The Euclidean distance between these nodes serves as the weight of the corresponding edge.  Matching pairs consist of the first image and subsequent images with 5 image sequence gaps (such as image 1 vs. image 6 and so on). See Figure \ref{Fig.House matching} to visualize the matching results. 

Compared to the previous matching task, Table \ref{tab:house} shows that CSGO's advantage over other algorithms has further increased, with at least a 15\% improvement in accuracy and a tenfold increase in efficiency. Figure \ref{Fig.House experiments} illustrates that CSGO consistently achieves the lowest error in most cases while maintaining a significant lead in efficiency.

\begin{figure}[htbp]
    \centering
    \subfigure[10\% links]{
        \includegraphics[width=0.4\textwidth]{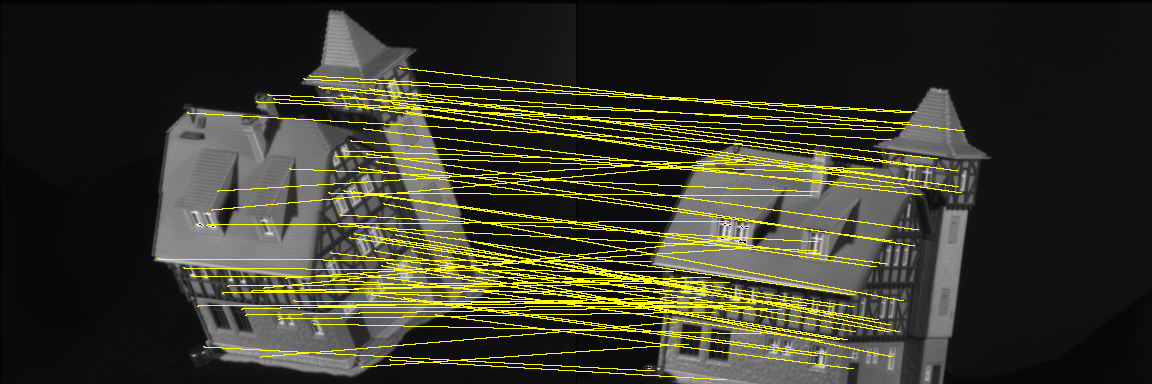}
    }
    \subfigure[30\% links]{
	\includegraphics[width=0.4\textwidth]{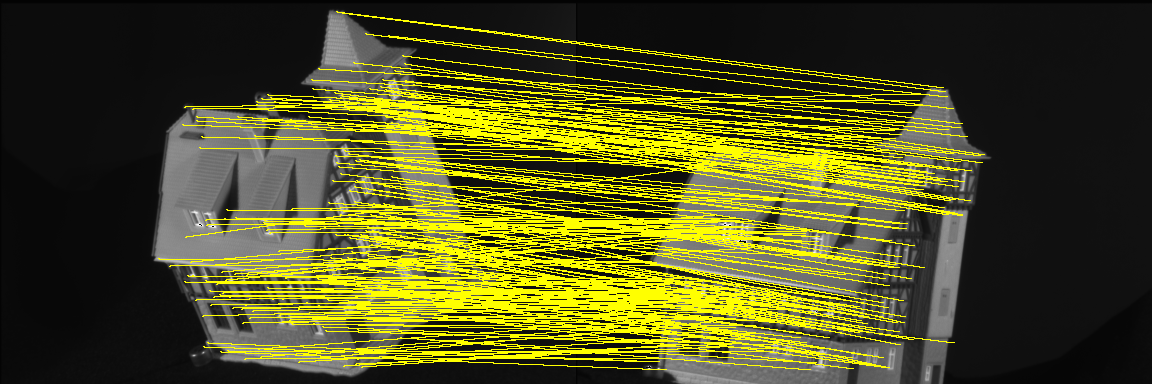}
    }
    \quad    
    \subfigure[50\% links]{
    	\includegraphics[width=0.4\textwidth]{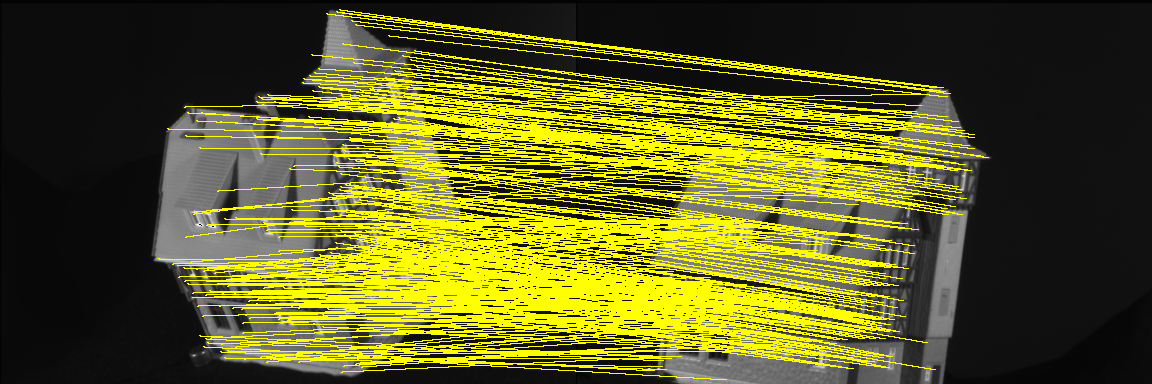}
    }
    \subfigure[100\% links]{
	\includegraphics[width=0.4\textwidth]{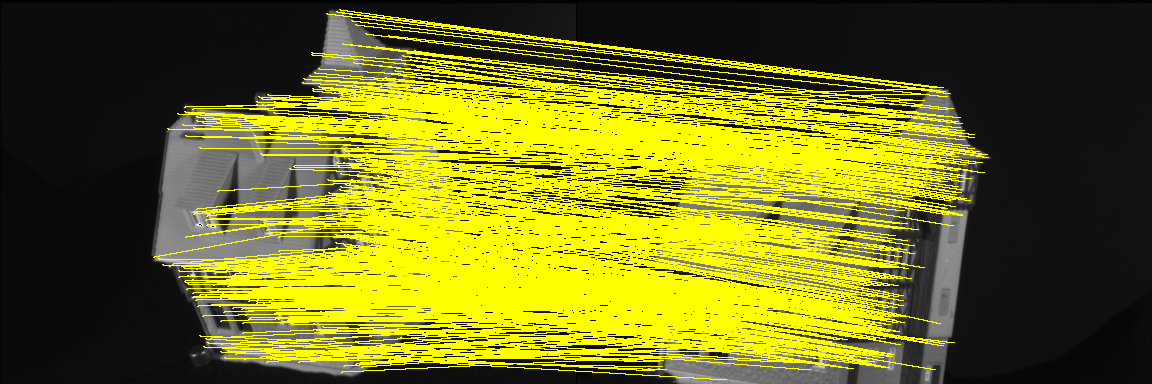}
    }
    \caption{House matching results}
    \label{Fig.House matching}
\end{figure}

\begin{table}[ht]
\centering
\caption{comparisons between algorithms on graphs from house sequence.}
\label{tab:house}
\begin{tabular}{lcccccccc}
\hline
               & DSPFP/CSGO &  &  & GA/CSGO &  &  & AIPFP/CSGO &  \\ \hline
Running time     & 10.0X       &  &  & 30.9X       &  &  & 43.7X     &  \\
Matching error & 1.11       &  &  & 1.12       &  &  & 1.27     &  \\ \hline
\end{tabular}
\end{table}

\begin{table}[ht]
\centering
\caption{comparisons between algorithms on graphs from real-world pictures.}
\label{tab:real-world}
\begin{tabular}{lcccccccc}
\hline
               & DSPFP/CSGO &  &  & GA/CSGO &  &  & AIPFP/CSGO &  \\ \hline
Running time     & 10.3X       &  &  & 31.1X     &  &  & 57.9X       &  \\
Matching error & 1.20       &  &  & 1.35     &  &  & 1.15     &  \\ \hline
\end{tabular}
\end{table}

\begin{figure}[H]
\centering  
\includegraphics[width=0.8\textwidth]{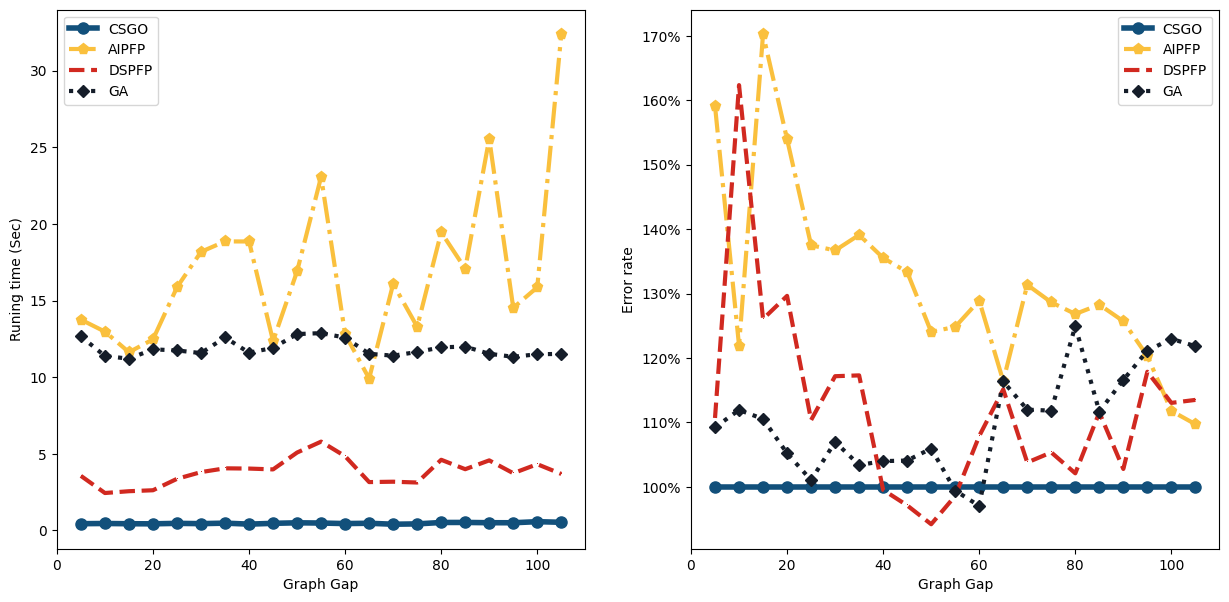} 
\caption{Graphs from house sequence.}
\label{Fig.House experiments}
\end{figure}



\subsection{Plain graphs}
This set of experiments evaluates CSGO and other matching algorithms by two sets of plain graphs:
the yeast's protein-protein interaction (PPI) networks contain 1,004 proteins and 4,920 high-confidence interactions\footnotemark[4]; the social network comprising 'circles' (or 'friends lists') from Facebook \cite{snapnets} contains 4039 users (nodes) and 88234 relations (edges). Following the experimental protocol of \cite{xu2019scalable}, we compare different methods on matching networks with $5\%$, $15\%$ and $25\%$ noisy versions. Table \ref{tab:PPI} and Table \ref{tab:facebook} list the performance of various methods. In both tasks, CSGO outperforms other algorithms, showing approximately a 20\% improvement in accuracy, and ranks among the fastest in terms of speed.
\begin{table}[ht]
\centering
\caption{Comparisons on yeast PPI}
\label{tab:PPI}
\resizebox{\columnwidth}{!}{%
\begin{tabular}{c|ccccccc}
\hline
Yeast network & \multicolumn{2}{c}{5\% noise} & \multicolumn{2}{c}{15\% noise} & \multicolumn{2}{c}{25\% noise} \\ \hline
Methods & Node Acc & time  & Node Acc & time  & Node Acc & time  \\ \hline
GRASP \cite{hermanns2023grasp}     & 38.6\%   & \textbf{1.1}s    & 8.3\%   & \textbf{1.2}s    & 5.6\%   & \textbf{1.2}s    \\
S-GWL \cite{xu2019scalable}   & 81.3\%   & 82.3s    & 62.4\%   & 82.1s    & 55.5\%  & 88.4s   \\
GWL\cite{xu2019gromov}   & 83.7\%   & 226.4s    & 66.3\%   & 254.7s    & 57.6\%  & 246.5s   \\
GA \cite{gold1996graduated}      & 14.0\%   & 24.4s    & 9.6\%    & 24.5s    & 7.4\%    & 24.0s    \\
DSPFP \cite{lu2016fast}   & 78.1\%   & 10.2s    & 60.8\%   & 
10.14s     & 42.9\%   & 9.8s       \\
AIPFP \cite{leordeanu2009integer,lu2016fast}     & 43.1\%   & 105.4s    & 27.1\%   & 75.2s   & 22.1\%   & 73.8s\\ \hline
CSGO      & \textbf{91.3\%}   & 4.3s    & \textbf{85.0\%}    & 4.4s    & \textbf{80.7\%}    & 4.6s       \\ \hline
\end{tabular}%
}
\end{table}
\begin{table}[ht]
\centering
\caption{Comparisons on Facebook network}
\label{tab:facebook}
\resizebox{\columnwidth}{!}{%
\begin{tabular}{c|ccccccc}
\hline
Social network & \multicolumn{2}{c}{5\% noise} & \multicolumn{2}{c}{15\% noise} & \multicolumn{2}{c}{25\% noise} \\ \hline
Methods & Node Acc & time  & Node Acc & time  & Node Acc & time  \\ \hline
GRASP\cite{hermanns2023grasp}     & 37.9\%   & \textbf{63.6s}    & 20.3\%   & \textbf{67.4s}    & 15.7\%   & \textbf{71.3s}    
\\ 
S-GWL \cite{xu2019scalable}   &  26.4\%   &  1204.1s    &  18.3\%   &  1268.2s    &  17.9\%  &  1295.8s   \\
GWL\cite{xu2019gromov}   &  78.1\%   &  3721.6s    &  68.4\%   &  4271.3s    &  60.8\%  &  4453.9s   \\
GA \cite{gold1996graduated}      & 35.5\%   & 793.2s    & 21.4\%    & 761.7s    & 16.0\%    & 832.6s    \\
DSPFP \cite{lu2016fast}   &  79.7\%   & 151.3s    & 68.3\%   & 
154.2s     & 62.2\%   & 156.9s       \\
AIPFP \cite{leordeanu2009integer,lu2016fast}     & 68.6\%   & 2705.5s    & 55.1\%   &2552.7s   & 47.8\%   & 2513.8s    \\ \hline CSGO       & \textbf{91.1\%}   & 87.3s     & \textbf{88.3\%}    &  88.3s  & \textbf{86.3\%}    & 89.1s         \\ \hline
\end{tabular}%
}
\end{table}

\subsection{The effect of adaptive step size parameter}
This set of experiments investigates the impact of employing the adaptive step size parameter across four algorithms that employ different constraining operators. Random graphs are generated as follows: (1) generating $n$ pairs of numbers representing the coordinates of nodes; (2) connecting these nodes by the Delaunay triangulation method; (3) assigning Euclidean distances between nodes as weights for the corresponding edges. Its noisy version contains $q\%$ nodes' deletion, where $ q \in \{1, 2, 3, 4, 5\}$. Each experiment is repeated one hundred times to account for randomness.
The matching results are shown in Figure \ref{Fig.adaptive} and Table \ref{tab:improvements}.
\begin{figure}[h]
\centering  
\includegraphics[width=1\textwidth,height= 2in]{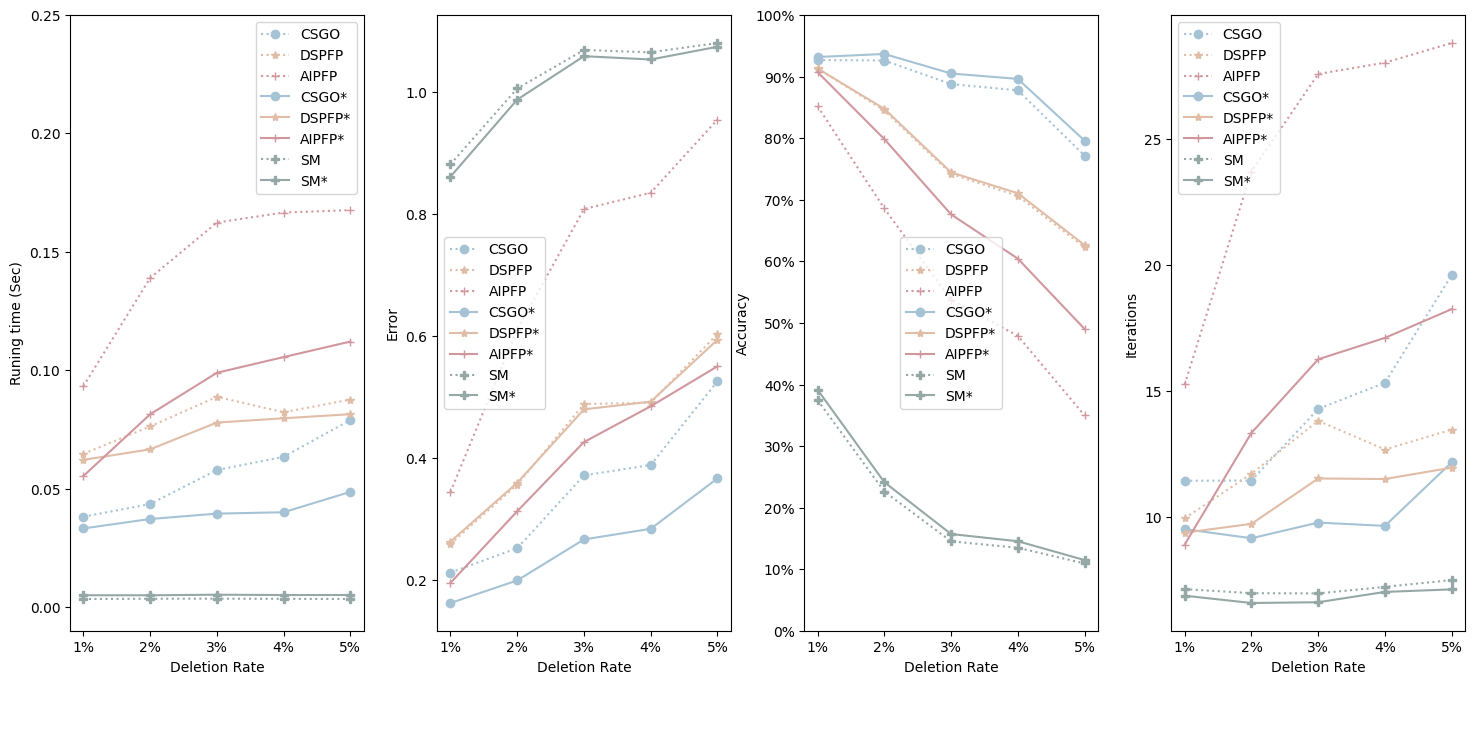} 
\caption{The effect of adaptive step size parameter in noisy graph matching. Algorithms with * represent algorithms equipped with the adaptive step size parameter.}
\label{Fig.adaptive}
\end{figure}

\begin{table}[]
\centering
\caption{Improvements from the adaptive step size parameter}
\label{tab:improvements}
\begin{tabular}{cccccc}
\hline
Algorithms & Constraining operator   & Iteration   & Time   & Matching error & Accuracy \\ \hline
CSGO       & scalable softassign & 30.3\% & 29.5\%  & 27.8\%           & 1.4\%     \\
DSPFP      & alternating projeciton & 13.9\% & 11\%   & 1\%           & 0.4\%     \\
AIPFP & greedy method  & 40.0\% & 49.5\%  & 43.7\%          & 21.7\%    \\
SM         & spectral normalization & 4.6\% & -44.6\% & 0.8\%           & 2.7\%    \\ \hline
\end{tabular}
\end{table}
Figure \ref{Fig.adaptive} and Table \ref{tab:improvements} show that the adaptive step size parameter strategy enhances the accuracy of constrained gradient algorithms. Besides, this strategy reduces all algorithms' iterations. Most of the algorithms' efficiency benefits from this, except SM whose constraining operator is cheap to compute. Among algorithms with doubly stochastic constraints, the proposed CSGO consistently performs better.

\subsection{The effect of warm-start strategy}

This set of experiments assesses the benefits of the warm-start method for the CSGO algorithm in the task of attributed graph matching. We evaluated this strategy using the attributed graphs from real-world images, each with four distinct node quantities.  The results, shown in Figure \ref{Fig.warm}, suggest that as the number of nodes increases, the acceleration effect becomes more pronounced. This is likely due to the computational cost of the gradient matrix increasing more rapidly compared to other modules. We estimate that the maximum acceleration achievable by this method is approximately 30\%, as CSGO typically requires only 3 to 5 iterations to converge in attributed graph matching.

\begin{figure}[h]
\centering  
\includegraphics[width=0.4\textwidth]{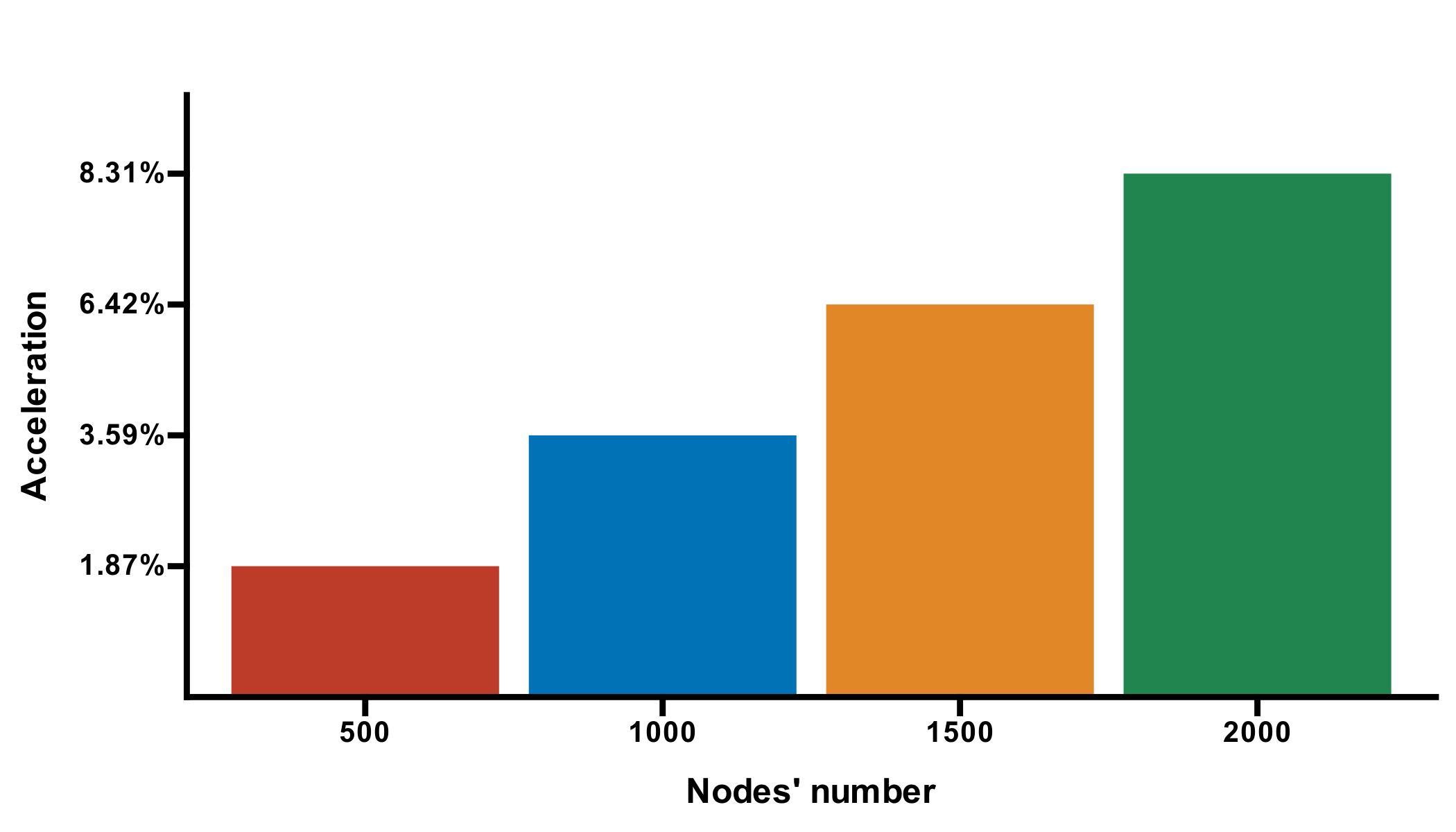} 
\caption{The benefits of the warm-start method for the CSGO algorithm in the task of attributed graph matching.}
\label{Fig.warm}
\end{figure}

\section{Conclusions}
We propose a constrained-softassign gradient optimization for large graph matching problems by a scalable softassign, the adaptive step size parameter, and a warm-start strategy. Scalable softassign is robust to the nodes' number and avoids the risk of overflow based on rigorous mathematical analysis. The adaptive step size parameter can guarantee the algorithms' convergence and enhance their robustness and efficiency. Compared with other state-of-the-art algorithms, CSGO achieves high levels of both speed and accuracy.

A limitation of this study is that the constrained gradient algorithms may converge to a local maximum, and the optimality of the solutions cannot be guaranteed. Another limitation is the algorithms' sensitivity to variable initialization, as they lack a robust initialization strategy. Therefore, our future work will focus on exploring methods to avoid or escape local maxima and improving the initialization.

%

{\small

\bibliography{0.Main.bib}
}

\appendix
\section{Lawler quadratic problem and iterative formule}

This section introduces the Lawler quadratic problem \cite{gold1996graduated,leordeanu2005spectral,leordeanu2009integer} and the corresponding constrained gradient method.

\subsection{Lawler quadratic problem}
The matching score of the node part can be defined by a measure function $\Omega_v( \cdot, \cdot)$:

\begin{equation}
\sum_{v_{i} \in V} \sum_{\tilde{v}_{\tilde{i}}\in \tilde{V}}  
  M_{i \tilde{i}} \Omega_v(F_i,\tilde{F}_{\tilde{i}}) = \textbf{m}^T\mathbf{k},
\label{sco.node}
\end{equation}
where $\mathbf{m} = \operatorname{vec}(M) \in \mathbb{R}^{n^2 \times 1}$ and $\mathbf{k} \in \mathbb{R}^{n^2 \times 1}$ vector used to store similarities between nodes' features.
Similarly, the total matching score of the edge part is defined as:
\begin{equation}
\frac{1}{2}\sum_{e_{ij} \in E} \sum_{\tilde{e}_{\tilde{i}\tilde{j}} \in \tilde{E}}
M_{i \tilde{i}} M_{j \tilde{j}} \Omega_e(A_{ij},\tilde{A}_{\tilde{i} \tilde{j}}) = \frac{1}{2} \mathbf{m}^{T} W \mathbf{m}.
\label{sco.edge}
\end{equation}
where $\Omega_e(A_{ij},\tilde{A}_{\tilde{i} \tilde{j}})$ represents the similarity between $E_{ij}$ and $\tilde{E}_{\tilde{i} \tilde{j}}$, and $W$ is an $n^2 \times n^2$ \textit{compatibility matrix} to store these similarities, i.e., $W_{i \tilde{i},j \tilde{j}} = \Omega_e(A_{ij},\tilde{A}_{\tilde{i} \tilde{j}})$. $M_{i \tilde{i}} M_{j \tilde{j}}=1$ means that $E_{ij}$ matches $\tilde{E}_{\tilde{i}\tilde{j}}$ (if both two edges exist). Combined two parts, the graph matching problem is formulated as
\begin{equation}
\begin{array}{cc}
\max\limits_{\mathbf{m} \in \Pi_{n^2 \times 1}} z(\mathbf{m}), \ \ z(\mathbf{m})=\frac{1}{2} \mathbf{m}^{T} W \mathbf{m}  + \lambda \mathbf{m}^T\mathbf{k}.

\label{eq.IQP}
\end{array}
\end{equation}
This problem is called Lawler quadratic problem \cite{1963The}. Since the computation on the $n^2 \times n^2$ compatibility matrix requires $O(n^4)$ operations, algorithms based on this formulation inevitably have space complexity $O(n^4)$ and time complexity $O(n^4)$.  When $W=A \otimes \tilde{A}$ and $\mathbf{k}= \operatorname{vec}(F \otimes \widetilde{F}^T)$, the problem \eqref{eq.IQP} can be rewritten as problem \eqref{eq.Object_fast}.  The derivation can be obtained directly from in \cite[Th.16.2.2]{Harville2008Matrix}. 
\subsection{Transition of the constrained gradient method}
The constrained gradient method to problem \eqref{eq.IQP} is
\begin{equation}
\begin{array}{cc}
    \mathbf{m}^{(t+1)}=(1- \alpha)\mathbf{m}^{(t)}  + \alpha \mathbf{d}^{(t)},\\
    \mathbf{d}^{(t)} =  \mathcal{P}(\nabla z(\mathbf{m}^{(t)}))= \mathcal{P}(W\mathbf{m}^{(t)} +\lambda \mathbf{k}).
    \end{array}
    \label{iter.quadratic}
\end{equation}
Computation of $W\mathbf{m}$ in each iteration requires $O(n^4)$ operations. According to \cite[Th.16.2.1]{Harville2008Matrix} that $ (A\otimes \tilde{A}) \mathbf{m} = \operatorname{vec}{(AM\tilde{A})}$, the iterative formula \eqref{iter.quadratic} can be rewritten as \eqref{iter.DSPFP}.

 \section{Common constraining operators}
The \textbf{Hungarian method} \cite{kuhn1955hungarian} can find a nearest permutation matrix $P$ to $X$:
\begin{equation}
\mathcal{P}_{H}(X)=\arg \min_{P\in \Pi_{n \times n}} \  \ \|P-X\|_{Fro}.
\label{eq:linear assign}
\end{equation}
This method can also transform a solution of \eqref{eq.Object_relaxedKB} into a matching matrix. This algorithm has $O(n^3)$ time complexity. Since the Hungarian method only find one solution of the linear assignment problem \eqref{eq:linear assign}, other solutions are ignored when multiple solutions exist.

\textbf{Greedy linear method} is an approximating method for the linear assignment problem \eqref{eq:linear assign}. By comparison, the Hungarian method is designed to find a global solution for the linear assignment problem, so it inevitably discards some high-value candidates and retains some low-value ones. To prioritize retaining high-valued candidates, \citet{leordeanu2005spectral} propose a greedy linear method, which does not guarantee the optimum. For a given input matrix $X$, this method works as follows:

 1. Initialize an $n \times n$ zero-valued matrix $P$.

 2. Find the index $(i, j)$ such that $X(i, j)$ is the largest element of $X$. 

 3. Let $P(i, j)= 1$, $X(i, :) =0$ and $X(:, j) =0$.

 4. Repeat step 2 and 3 until $X$ is empty, and return $P$.

\noindent Although this greedy method has $O(n^3)$ time complexity, it is relatively cheaper and easier than Hungarian  \cite{leordeanu2005spectral}. 

\textbf{Alternating projection method} \cite{zass2006doubly} aims to find a nearest doubly stochastic matrix $P$ to $X$:
\begin{equation}
\mathcal{P}_{D}(X) = \arg \min_{P\in \Sigma_{n \times n}} \|P-X\|_{Fro}.
\label{eq:doubly assign2}
\end{equation}
The solution of \eqref{eq:doubly assign2} is a convex combination of all optimal solutions of \eqref{eq:linear assign} \cite{peyre2017computational}, so $\mathcal{P}_{D}(X)$ can contain information of multiple discrete solutions of \eqref{eq:linear assign}. This method works as follows:
\begin{equation}
\mathcal{P}_{1}(X)=X+(\frac{\mathbf{1}}{n}I+\frac{\mathbf{1}^{T}X\mathbf{1}}{n^2}I-\frac{\mathbf{1}}{n}X)\mathbf{1}\mathbf{1}^{T}-\frac{\mathbf{1}}{n}\mathbf{1}\mathbf{1}^{T}X,
\end{equation}
\begin{equation}
\mathcal{P}_{2}(X)=\frac{X+|X|}{2},
\end{equation}
\begin{equation}
\mathcal{P}_{D}(X)=\dots \mathcal{P}_{2}(\mathcal{P}_{1}(\mathcal{P}_{2}(\mathcal{P}_{1}(\mathcal{P}_{2}(\mathcal{P}_{1}(X)))))).
\end{equation} 
Each iteration requires $O(n^2)$ operations. Since this projection is nonexpansive, the number of iterations positively correlates with the magnitude of $X$ \cite{zass2006doubly}. To avoid potential expensive cost, \citet{lu2016fast} sets the maximum number of iterations to 30. However, limited iterations cannot guarantee that the $\mathcal{P}_{D}(X)$ is doubly stochastic (more details can be found in the experiment part).



\end{document}